\newcommand{\abs}[1]{\left\vert#1\right\vert}
\newcommand{\I}[1]{\mathbb{I}\{#1\}}
\newcommand{\mrd}{\,\mathrm{d}}
\newcommand{\R}{\mathbb{R}}
\newcommand{\E}[1]{\mathbb{E}\left[#1\right]}
\title{On the Error Rate of Conditional Quasi-Monte Carlo for Discontinuous Functions\thanks{Submitted to the editors DATE: April, 2018.
\funding{This work was supported by the National Science Foundation of	China under grant 71601189.}}}
\author{Zhijian He\thanks{School of Mathematics, South China University of Technology, Guangzhou 510641, P. R. China
  (\email{hezhijian@scut.edu.cn}).}}
\begin{document}

\maketitle

\begin{abstract}
  This paper studies the rate of convergence for conditional quasi-Monte Carlo (QMC), which is a counterpart of conditional Monte Carlo. We focus on discontinuous integrands defined on the whole of $\R^d$, which can be unbounded. Under suitable conditions, we show that conditional QMC not only has the smoothing effect (up to infinitely times differentiable), but also can bring orders of magnitude reduction in integration error compared to plain QMC. Particularly, for some typical problems in options pricing and Greeks estimation, conditional randomized QMC that uses $n$ samples yields a mean error of $O(n^{-1+\epsilon})$ for arbitrarily small $\epsilon>0$. As a by-product, we find that this rate also applies to randomized QMC integration with all terms of the ANOVA decomposition of the discontinuous integrand, except the one of highest order.
\end{abstract}

\begin{keywords}
 Conditional quasi-Monte Carlo, Smoothing,	
 ANOVA decomposition,
 Singularities,
 Discontinuities 
\end{keywords}

\begin{AMS}
  41A63, 65D30, 97N40
\end{AMS}

\section{Introduction}
Conditional Monte Carlo (CMC) is widely used in stochastic simulation (see \cite{asmu:glyn:2007,fu:hu:1997}), which is also called conditioning. Suppose that our goal is to estimate an expectation (integral)
\begin{equation*}
I(f) = \E{f(\bm x)}= \int_{\R^d}f(\bm x)\rho_d(\bm x)\mrd \bm x,
\end{equation*}
where $d$ is the dimension of the problem, and $\rho_d(\bm x)$ is the probability density function (PDF) of the random vector $\bm x=(x_1,\dots,x_d)\in \R^d$. The basic idea of CMC is to use conditional
expectation of $f(\bm x)$ as an estimator. CMC enjoys a good effect of reducing the variance, compared to plain Monte Carlo (MC).
On the other hand, thanks to another effect of smoothing, CMC is widely used in sensitivity estimation when the problem involves discontinuities (see \cite{fu:2009,fu:hu:1997}). In practice, there are two major concerns in using CMC:
\begin{itemize}
	\item the choice of conditioning variables (say, $\bm z$), and
	\item the tractability of the resulting conditional expectation $\E{f(\bm x)|\bm z}$.
\end{itemize}
In this paper, we restrict our attention to the case of choosing some components of $\bm x$ as the conditioning variables $\bm z$ and assume that the components of $\bm x$ are independent identically distributed (IID). We focus on investigating the smoothness property of the resulting conditional expectation $\E{f(\bm x)|\bm z}$ rather than inspecting the tractability of $\E{f(\bm x)|\bm z}$.

Quasi-Monte Carlo (QMC) is usually applied to integration problems over the unit cube, which yields an asymptotic error rate of $O(n^{-1}(\log n)^d)$ when the integrand has bounded variation in the sense of Hardy and Krause \cite{nied:1992}.
Conditional QMC (CQMC) is a counterpart of CMC by replacing the random points with QMC points. We consider a
setting that the integrand $f(\bm x)$ is discontinuous, under which plain QMC may lose its power because QMC favors smooth integrands. He and Wang \cite{he:wang:2015} and He \cite{he:2017} gave convergence rates of randomized QMC (RQMC) for certain classes of discontinuous functions. The rates decline quickly as the dimension $d$ goes up. CQMC has the potential to improve the efficiency of QMC as conditioning could smooth the integrand more or less. Our main interest is to provide theoretical guarantees for using CQMC. 

A necessary first step in applying QMC methods to an integral over $\R^d$ is to transform the integral into an integral over the unit cube $(0,1)^d$. That transformation may introduce singularities at the boundary of $(0,1)^d$. In general, conditioning cannot remove such singularities, but it brings a smoothing effect \cite{grie:2010,grie:2013,grie:2016}. Griebel et al. \cite{grie:2013,grie:2016} studied kink functions of the form $f(\bm x)=\max(\phi(\bm x),0)$, where $\phi$ is a smooth function on $\R^d$, and showed that under suitable conditions, integrating out some components of $\bm x$ (this process is actually the conditioning method in our terminology) leads to a function with unlimited smoothness. Griebel et al. \cite{grie:2010} considered the setting of integration problems over the domain $[0,1]^d$. More recently, Griewank et al. \cite{grie:2017} considered a smoothing method called	``preintegration". In the preintegration method, one of the variables is integrated out for non-smooth integrands with kinks or discontinuities. By extending the work in \cite{grie:2013,grie:2016}, Griewank et al. \cite{grie:2017} proved that the presmoothed integrand belongs to an appropriate mixed derivative function space. However, these papers do not give error analysis for the smoothed function. Particularly, Griebel et al. \cite{grie:2013}  commented that 

``These results are
expected to lay the foundation for a future rigorous error analysis of direct numerical
methods for option pricing integrals over $\R^d$, methods that do not involve mapping
$\R^d$ to the unit cube."

Motivated by a sequence of papers by Griebel et al. \cite{grie:2010,grie:2013,grie:2016} and Griewank et al. \cite{grie:2017}, we first study the smoothness property of conditioning for certain discontinuous functions, which often arise in the pricing and hedging of financial derivatives. We then give conditions such that the resulting function satisfies the so-called boundary growth condition studied in Owen \cite{owen:2006}. The error analysis for CQMC is thus carried out by applying the results in Owen \cite{owen:2006}. In particular, we show that  conditional RQMC yields a mean error rate of $O(n^{-1+\epsilon})$ for arbitrarily small $\epsilon>0$ under some conditions. As illustrative examples, we show that the rate $O(n^{-1+\epsilon})$ is attainable for arithmetic Asian options with their Greeks and binary options, when using proper  constructions of the Brownian motion and conditioning variables. It is known that using dimension reduction methods in QMC can enhance the efficiency of QMC \cite{imai:tan:2006,wang:sloan:2011}. The rate $O(n^{-1+\epsilon})$ also holds if one uses some dimension reduction methods to combine with CQMC.  As a by-product, we give error rates for RQMC integration with all terms of the ANOVA decomposition of the discontinuous integrand. Under some conditions, RQMC can achieve a mean error rate of $O(n^{-1+\epsilon})$ for all ANOVA terms, except the one of highest order. While for the highest order term which is non-smooth, the rate may be just $O(n^{-1/2-1/(4d-2)+\epsilon})$, as found in He \cite{he:2017}.

To summarize, we make the following contributions in this paper.
\begin{itemize}
	\item We extend the work of \cite{grie:2013,grie:2016} to  discontinuous integrands by studying the smoothness property of conditioning. The analysis in  \cite{grie:2013,grie:2016} is based on the framework of  Sobolev space, while our analysis relies on uniform convergence conditions for some improper integrals.
	\item More importantly, we give rates of convergence for CQMC. Our analysis does not rely on the concrete form of the resulting conditional expectation. Additionally, the required conditions are very easy to check for our applications.
\end{itemize}
We find that the choice of conditioning variables is very important in CQMC as it has an impact on the smoothness of the resulting estimate and hence on the QMC accuracy. The theoretical underpinnings  in this paper are expected to predict the benefits of using CQMC in real-world applications.

The remainder of the paper is organized as follows. We formulate the problem in Section~\ref{sec:formulation}. Section~\ref{sec:smooth} studies the smoothing effect of conditioning. Section~\ref{sec:main} establishes rigorous error analysis for CQMC sampling. Section~\ref{sec:anova} studies the error rate of QMC integration with all terms of ANOVA decomposition. Several  examples from financial engineering are studied in Section~\ref{sec:ex} to exemplify the value of
our theoretical underpinnings, followed by concluding remarks in Section~\ref{sec:concl}.

\section{Problem Formulation}\label{sec:formulation}
Consider an expectation (or equivalently an integral over $\R^d$)
\begin{equation}\label{eq:integral}
I(f) = \E{f(\bm x)}= \int_{\R^d}f(\bm x)\prod_{i=1}^d\rho(x_i)\mrd \bm x,
\end{equation}
where the components of $\bm x$ are IID with PDF $\rho$ and cumulative distribution function (CDF) $\Phi$. For simplicity, we use the same notation for random variable and the integration variable. Throughout this paper, assume that $\E{\abs{f(\bm x)}}<\infty$.
To estimate the integral \eqref{eq:integral} by QMC, one may transform \eqref{eq:integral} into an integral over $(0,1)^d$
\begin{equation*}
I(f)=\int_{(0,1)^d} f(\Phi^{-1}(\bm u))\mrd \bm u,
\end{equation*}
where the inverse function $\Phi^{-1}$ applies to each component of $\bm u$.
We then take the following quadrature rule as an estimate of $I(f)$,
\begin{equation}\label{eq:qmc}
\hat{I}(f)=\frac{1}{n}\sum_{i=1}^{n}f(\Phi^{-1}(\bm u_i)),
\end{equation}
where $\bm u_i\in (0,1)^{d}$. In this paper, we are interested in discontinuous integrands over $\R^d$ of the form
\begin{equation}\label{eq:integrand}
f(\bm x)= g(\bm x)\I{\phi(\bm x)\geq 0},
\end{equation}
where $g, \phi$ are smooth functions of all variables. See Section~\ref{sec:ex} for examples of this form.

Denote $\bm x_{-j}$ as the $d-1$ components of $\bm x$ apart from $x_j$.
Integrating \eqref{eq:integrand} with respect to $x_j$ (i.e., taking $\bm x_{-j}$ as the conditioning variables) gives
\begin{equation*}\label{eq:condint}
(P_j f)(\bm x_{-j}):=\E{f(\bm x)|\bm x_{-j}}=\int_{-\infty}^{\infty} f(x_j,\bm x_{-j})\rho(x_j)\mrd x_j.
\end{equation*}
We should note that $I(f)=I(P_j f)$. RQMC integration with  $P_j f$ renders unbiased estimate, as the CMC sampling; see \cite{lecu:lemi:2005} for a survey on RQMC.  

Prior to studying the smoothness property of the function $P_j f$, we specify some notations. Denote $1{:}d=\{1,2,\dots,d\}$ and
$D_j\phi:=\partial \phi/\partial x_j$. For $v\subseteq 1{:}d$, $D_v \phi$ denotes the derivative taken with respect to each $x_j$ once  for all $j\in v$.  For any multi-index $\bm \alpha=(\alpha_1,\dots,\alpha_d)$ whose components are nonnegative integers, 
\begin{equation*}
(D^{\bm \alpha} \phi)(\bm x):=\frac{\partial^{\abs{\bm \alpha}}\phi}{\partial x_1^{\alpha_1}\dots x_d^{\alpha_d}}(\bm x),
\end{equation*}
where $\abs{\bm\alpha}=\sum_{i=1}^{d}\alpha_i$. If $\alpha_i=1$ for all $i\in v$ and $\alpha_i=0$ otherwise, then $D^{\bm \alpha} \phi=D_v \phi$.

\section{The Smoothing Effect of Conditioning}\label{sec:smooth}
In this section, we study the smoothness property of $P_j f$. A key condition we require below is the uniform convergence for improper integrals with parameters. That condition ensures the interchange of differentiation and integration.

\begin{definition}
	Let $\bm x\in \mathbb{R}^s$. An  integral $\int_{-\infty}^\infty f(t,\bm x)\mrd t$ converges uniformly on a set $\Theta\subseteq \mathbb{R}^s$ if for any $\epsilon>0$, there exists a constant $A_0>0$ depending on $\epsilon$ such that 
	$$\abs{\int_{\abs{t}>A}f(t,\bm x)\mrd t}<\epsilon$$
	for all $A>A_0$ and all $\bm x\in \Theta$.
\end{definition}

\begin{theorem}\label{thm:changes}
	Let $\Omega$ be an open set of $\mathbb{R}^s$, and let $f(t,\bm x)$ be a function defined over $\mathbb{R}\times \Omega$. Suppose that 
	\begin{itemize}
		\item $f(t,\bm x)$  and $\partial f/\partial x_i$ are continuous functions over $\mathbb{R}\times \Omega$, where $i\in 1{:}s$;
		\item $\int_{-\infty}^\infty f(t,\bm x)\mrd t$ exists for any $\bm x\in\Omega$; and 
		\item for any $\bm x^*\in\Omega$, there exists a set $B(\bm x^*,\delta)\subseteq \Omega$ with $\delta>0$ such that the integral $\int_{-\infty}^\infty \frac{\partial }{\partial x_i}f(t,\bm x)\mrd t$ converges uniformly on the set $B(\bm x^*,\delta)$.
	\end{itemize}
	Then 
	\begin{equation}\label{eq:order}
	\frac{\partial }{\partial x_i}\int_{-\infty}^\infty f(t,\bm x)\mrd t=\int_{-\infty}^\infty \frac{\partial }{\partial x_i}f(t,\bm x)\mrd t,
	\end{equation}
	which is continuous on $\Omega$. If $\psi(\bm x)\in \mathcal{C}^1(\mathbb{R}^s)$, then
	\begin{align}
	\frac{\partial }{\partial x_i}\int_{-\infty}^{\psi(\bm x)} f(t,\bm x)\mrd t&=\int_{-\infty}^{\psi(\bm x)} \frac{\partial }{\partial x_i} f(t,\bm x)\mrd t + f(\psi(\bm x),\bm x)\frac{\partial }{\partial x_i}\psi(\bm x),\label{eq:change1}\\\frac{\partial }{\partial x_i}\int_{\psi(\bm x)}^{\infty}f(t,\bm x)\mrd t&=\int_{\psi(\bm x)}^{\infty} \frac{\partial }{\partial x_i} f(t,\bm x)\mrd t - f(\psi(\bm x),\bm x)\frac{\partial }{\partial x_i}\psi(\bm x),\label{eq:change2}
	\end{align}
	which are both continuous on $\Omega$.
\end{theorem}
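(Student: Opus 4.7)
The plan is to prove \eqref{eq:order} first by approximating the improper integral by proper ones on compact intervals, and then to derive \eqref{eq:change1}--\eqref{eq:change2} by applying the chain rule to an auxiliary ``partial antiderivative'' function.

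For \eqref{eq:order}, fix an arbitrary $\bm x^*\in\Omega$ and let $B:=B(\bm x^*,\delta)$ be the neighborhood supplied by the third hypothesis. For each $A>0$ define the proper-integral approximants
\begin{equation*}
F_A(\bm x):=\int_{-A}^{A} f(t,\bm x)\mrd t,\qquad G_A(\bm x):=\int_{-A}^{A}\frac{\partial f}{\partial x_i}(t,\bm x)\mrd t.
\end{equation*}
Since $f$ and $\partial f/\partial x_i$ are continuous on the compact rectangle $[-A,A]\times \overline{B}$, the classical Leibniz rule on a compact interval gives $\partial F_A/\partial x_i=G_A$, and both functions are continuous in $\bm x$. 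The second hypothesis yields pointwise convergence $F_A\to F:=\int_{-\infty}^\infty f(t,\cdot)\mrd t$ on $\Omega$, while the third yields uniform convergence on $B$ of $G_A$ to $G:=\int_{-\infty}^\infty (\partial f/\partial x_i)(t,\cdot)\mrd t$. The standard theorem on the interchange of uniform limits and differentiation then shows that $F$ is differentiable on $B$ with $\partial F/\partial x_i=G$, which is \eqref{eq:order}; as the uniform limit on $B$ of the continuous functions $G_A$, the function $G$ is continuous on $B$, hence on all of $\Omega$ by arbitrariness of $\bm x^*$.

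For \eqref{eq:change1}--\eqref{eq:change2}, introduce the partial antiderivative
\begin{equation*}
H(y,\bm x):=\int_{-\infty}^{y} f(t,\bm x)\mrd t,
\end{equation*}
well defined under the standing hypothesis that the full improper integral exists in the usual two-tail sense. The Fundamental Theorem of Calculus together with the continuity of $f$ gives $\partial H/\partial y=f(y,\bm x)$, and a verbatim repetition of the argument above, applied to the left half-line instead of the full line, yields $\partial H/\partial x_i=\int_{-\infty}^{y}(\partial f/\partial x_i)(t,\bm x)\mrd t$. Both partials of $H$ are continuous, so $H\in\mathcal{C}^1(\R\times\Omega)$. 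Writing $\int_{-\infty}^{\psi(\bm x)}f(t,\bm x)\mrd t=H(\psi(\bm x),\bm x)$ and applying the multivariate chain rule yields \eqref{eq:change1}, whose right-hand side is continuous in $\bm x$ as a combination of continuous functions. Finally, \eqref{eq:change2} follows from $\int_{\psi(\bm x)}^\infty f(t,\bm x)\mrd t=F(\bm x)-H(\psi(\bm x),\bm x)$ together with \eqref{eq:order}.

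The main obstacle is a bookkeeping issue rather than a conceptual one: the uniform-convergence hypothesis is stated only on a neighborhood of each point, so the differentiation-under-the-integral-sign step must be executed locally on $B(\bm x^*,\delta)$ and then stitched together via the arbitrariness of $\bm x^*$. One must also verify that the two-sided tail control in the paper's definition of uniform convergence delivers the one-sided control needed for $\partial H/\partial x_i$; this reduces to writing $\int_{-\infty}^{-A}(\partial f/\partial x_i)\mrd t$ as the difference between the full-line integral and $\int_{-A}^{\infty}(\partial f/\partial x_i)\mrd t$, both handled by the existing hypotheses provided each tail converges separately, as is the case in all applications targeted later in the paper. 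Once these local-to-global and tail-splitting points are settled, every remaining step is a textbook application of Leibniz on compact intervals, uniform convergence of continuous functions, and the chain rule.
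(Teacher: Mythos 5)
Your argument is correct and is essentially the standard proof that the paper simply outsources: the published proof cites Buck's \emph{Advanced Calculus} for \eqref{eq:order} and invokes the ``classic Leibniz rule for improper integrals'' for \eqref{eq:change1}--\eqref{eq:change2}, and your compact-truncation argument (proper integrals $F_A$, $G_A$, the interchange of uniform limits and differentiation) together with the auxiliary antiderivative $H(y,\bm x)$ and the chain rule is exactly that textbook route written out. The one caveat you flag yourself---that the paper's two-sided tail condition $\abs{\int_{\abs{t}>A}\partial f/\partial x_i\mrd t}<\epsilon$ must be upgraded to one-sided control of each tail in order to differentiate $H$ in $\bm x$---is genuine but harmless in this paper, since every application establishes uniform convergence via the Weierstrass test, which bounds each tail separately.
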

\begin{proof}
	See \cite{buck:2003} for the proof of interchanging the order of differentiation and integration in \eqref{eq:order}.  Equations \eqref{eq:change1} and \eqref{eq:change2} are consequences of applying the classic Leibniz rule for improper integrals. 
\end{proof}

\begin{assumption}\label{assump1}
	Let $j\in 1{:}d$ be fixed. Assume that 
	\begin{equation*}\label{eq:assump1}
	(D_j\phi)(\bm x)\neq 0 \text{ for all } \bm x\in \R^d.
	\end{equation*}
\end{assumption}

\begin{theorem}[Implicit Function Theorem]\label{thm:IFT}Let $r$ be a positive integer. Denote $U_j = \{\bm x_{-j}\in \R^{d-1}|\phi(x_j,\bm x_{-j})=0 \text{ for some }x_j\in \R\}$. If $\phi\in \mathcal{C}^r(\R^d)$ and Assumption~\ref{assump1} is satisfied, then $U_j$ is open, and there exists a unique function $\psi\in \mathcal{C}^r(U_j)$ such that
	\begin{equation*}
	\phi(\psi(\bm x_{-j}),\bm x_{-j}) = 0\text{ for all }\bm x_{-j}\in U_j,
	\end{equation*}
	and for all $k\neq j$, we have
	\begin{equation*}\label{eq:dpsi}
	(D_k \psi)(\bm x_{-j}) = -\frac{(D_k\phi)(\bm x)}{(D_j\phi)(\bm x)}\bigg|_{x_j=\psi(\bm x_{-j})}
	\end{equation*}
	for all $\bm x_{-j}\in U_j$.
\end{theorem}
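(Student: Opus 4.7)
The plan is to leverage the global nonvanishing of $D_j\phi$ (Assumption~\ref{assump1}) together with the classical local implicit function theorem. First I would observe that since $D_j\phi$ is continuous and nowhere zero on the connected set $\R^d$, the intermediate value theorem forces $D_j\phi$ to have constant sign; without loss of generality assume $(D_j\phi)(\bm x)>0$ everywhere. Consequently, for any fixed $\bm x_{-j}\in\R^{d-1}$, the one-variable map $x_j\mapsto\phi(x_j,\bm x_{-j})$ is strictly increasing on $\R$, so it has at most one zero. This establishes the \emph{global uniqueness} of $\psi(\bm x_{-j})$ whenever a zero exists, which in particular forces $\psi$ to be well defined as a single-valued function on $U_j$.

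Next I would prove openness of $U_j$ and $\mathcal{C}^r$ regularity of $\psi$ in one stroke by invoking the classical (local) implicit function theorem at each base point. Fix $\bm x_{-j}^\ast\in U_j$ and let $x_j^\ast=\psi(\bm x_{-j}^\ast)$, so $\phi(x_j^\ast,\bm x_{-j}^\ast)=0$ and $(D_j\phi)(x_j^\ast,\bm x_{-j}^\ast)\neq 0$. Since $\phi\in\mathcal{C}^r(\R^d)$, the standard IFT yields an open neighborhood $V\subseteq\R^{d-1}$ of $\bm x_{-j}^\ast$, an open interval $W\subseteq\R$ containing $x_j^\ast$, and a unique function $\tilde\psi\in\mathcal{C}^r(V)$ with $\tilde\psi(V)\subseteq W$ satisfying $\phi(\tilde\psi(\bm x_{-j}),\bm x_{-j})=0$ for all $\bm x_{-j}\in V$. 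This shows $V\subseteq U_j$, hence $U_j$ is open. Moreover, by the global uniqueness from the first step, $\psi$ must agree with $\tilde\psi$ on $V$, so $\psi$ inherits its $\mathcal{C}^r$ smoothness in a neighborhood of each point of $U_j$, giving $\psi\in\mathcal{C}^r(U_j)$.

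The derivative formula is then pure implicit differentiation: the identity $\phi(\psi(\bm x_{-j}),\bm x_{-j})\equiv 0$ on $U_j$ may be differentiated with respect to $x_k$ for $k\neq j$ via the chain rule, producing
\begin{equation*}
(D_j\phi)(\psi(\bm x_{-j}),\bm x_{-j})\,(D_k\psi)(\bm x_{-j})+(D_k\phi)(\psi(\bm x_{-j}),\bm x_{-j})=0,
\end{equation*}
which rearranges to the stated expression using that $D_j\phi\neq 0$. I do not expect a serious obstacle here: the only nontrivial point is that the classical IFT is a local statement, and bridging to a \emph{global} unique $\psi$ on $U_j$ requires the strict-monotonicity observation of Step~1. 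Once that global uniqueness is in hand, the rest is a direct patching argument.
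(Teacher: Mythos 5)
Your proof is correct and follows the standard route: the paper itself gives no argument here but simply cites Theorem~2.3 of Griebel, Kuo, and Sloan (2013), whose proof proceeds exactly as you do --- constant sign of $D_j\phi$ on the connected set $\R^d$ gives strict monotonicity in $x_j$ and hence global uniqueness of the zero, the local implicit function theorem then yields openness of $U_j$ and the $\mathcal{C}^r$ regularity of $\psi$ by patching, and the derivative formula follows by implicit differentiation. No gaps.
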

\begin{proof}
	See the proof of  Theorem 2.3 in \cite{grie:2013}. 
\end{proof}

\begin{theorem}\label{thm:grie}
	Let $r$ be a positive integer. Suppose that $f$ is given by \eqref{eq:integrand} with $g,\phi\in  \mathcal{C}^r(\R^d)$ and $\E{\abs{f(\bm x)}}<\infty$, $\rho\in \mathcal{C}^{r-1}(\R)$, and Assumption~\ref{assump1} is satisfied. Denote $\bm y=\bm x_{-j}$. Let
	\begin{align*}
	U_j &= \{\bm y\in \R^{d-1}|\phi(x_j,\bm y)=0 \text{ for some }x_j\in \R\},\label{eq:uj}\\
	U_j^+ &= \{\bm y\in \R^{d-1}|\phi(x_j,\bm y)>0 \text{ for all  }x_j\in \R\},\notag\\
	U_j^- &= \{\bm y\in \R^{d-1}|\phi(x_j,\bm y)<0 \text{ for all  }x_j\in \R\}.\notag
	\end{align*}
	Then $U_j$ is open, and there exists a unique function $\psi\in \mathcal{C}^r(U_j)$ such that 
	$\phi(\psi(\bm y), \bm y) = 0$ for all $\bm y\in U_j$. 
	Assume that for any $\bm y^*\in U_j$, there exists a set $B(\bm y^*,\delta)\subseteq U_j$ with $\delta>0$ such that  $\int_{-\infty}^\infty D^{\bm \alpha} g(
	x_j,\bm y)\rho(x_j)\mrd x_j$ converges uniformly on $B(\bm y^*,\delta)$ for any multi-index $\bm{\alpha}$ satisfying $\abs{\bm{\alpha}}\le r$ and $\alpha_j=0$.
	Assume also that every function over $U_j$ of the form
	\begin{equation}\label{eq:hy}
	h(\bm y) = \beta\frac{(D^{\bm \alpha^{(0)}}g)(\psi(\bm y),\bm y)\prod_{i=1}^a(D^{\bm \alpha^{(i)}}\phi)(\psi(\bm y),\bm y)}{[(D_j \phi)(\psi(\bm y),\bm y)]^b}\rho^{(c)}(\psi(\bm y)),
	\end{equation}
	where $\beta$ is a constant, $a,b,c$ are integers, and $\bm \alpha^{(i)}$ are multi-indices with the constraints $1\le a\leq 2r-1$, $1\le b\le 2r-1$, $0\le c\le r-1$, $\abs{\bm\alpha^{(i)}}\le r$, satisfies 
	\begin{equation}\label{eq:bounds}
	h(\bm y)\to 0\text{ as }\bm y\text{ approaches a boundary point of }U_j\text{ lying in }U_j^+\text{ or }U_j^-.
	\end{equation}
	Then $P_j f\in \mathcal{C}^r(\R^{d-1})$, and for every multi-index $\bm \alpha$ with $\abs{\bm \alpha}\leq r$ and $\alpha_j=0$,
	\begin{equation}\label{eq:bound4dfj}
	\abs{(D^{\bm \alpha} P_j f)(\bm y)}\le\int_{-\infty}^{\infty}\abs{(D^{\bm \alpha} g)(x_j,\bm y)}\rho(x_j)\mrd x_j+\sum_{i=1}^{M_{\abs{\bm \alpha}}} \abs{h_{\bm \alpha,i}(\bm y)},
	\end{equation}
	where $M_{\abs{\bm \alpha}}$ is a nonnegative integer depending only on $\abs{\bm \alpha}$, and for $\bm y\in U_j$ and $\abs{\bm{\alpha}}>0$, $h_{\bm \alpha,i}(\bm y)$ has the form \eqref{eq:hy} with parameters satisfying $1\le a\leq 2\abs{\bm \alpha}-1$, $1\le b\le 2\abs{\bm \alpha}-1$, $0\le c\le \abs{\bm \alpha}-1$, $|\bm\alpha^{(i)}|\le \abs{\bm \alpha}$, otherwise $h_{\bm \alpha,i}(\bm y)=0$.
\end{theorem}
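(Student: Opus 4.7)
The approach is to compute $P_j f$ separately on the three disjoint open sets $U_j$, $U_j^+$, and $U_j^-$ (whose union is $\R^{d-1}$), and then to glue the local formulas into a $\mathcal{C}^r$ function using condition \eqref{eq:bounds}. On $U_j^-$ the indicator $\I{\phi\ge 0}$ vanishes identically in $x_j$, so $P_j f \equiv 0$; on $U_j^+$ the indicator equals $1$ identically, so $P_j f(\bm y) = \int_{-\infty}^{\infty} g(x_j,\bm y)\rho(x_j)\mrd x_j$, and iterated application of \eqref{eq:order} (justified by the uniform-convergence hypotheses) yields $(D^{\bm\alpha} P_j f)(\bm y) = \int_{-\infty}^{\infty} (D^{\bm\alpha} g)(x_j,\bm y)\rho(x_j)\mrd x_j$. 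On $U_j$, Theorem~\ref{thm:IFT} furnishes $\psi\in\mathcal{C}^r(U_j)$; since $D_j\phi$ is continuous and non-vanishing on the connected set $\R^d$, it has fixed sign, so $\{x_j:\phi(x_j,\bm y)\ge 0\}$ equals $[\psi(\bm y),\infty)$ or $(-\infty,\psi(\bm y)]$ uniformly in $\bm y\in U_j$, and $P_j f$ reduces to the corresponding one-sided improper integral of $g\rho$, to which \eqref{eq:change1} or \eqref{eq:change2} applies.

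I would then prove by induction on $k = \abs{\bm\alpha}$ that on $U_j$,
\begin{equation*}
(D^{\bm\alpha} P_j f)(\bm y) = \int_{-\infty}^{\infty} (D^{\bm\alpha} g)(x_j,\bm y)\rho(x_j)\I{\phi\ge 0}\mrd x_j + \sum_{i=1}^{M_k} h_{\bm\alpha,i}(\bm y),
\end{equation*}
where each $h_{\bm\alpha,i}$ has the form \eqref{eq:hy} with parameters in the stated ranges. The base $k=0$ is immediate. For the inductive step, differentiating the integral part via \eqref{eq:change1}/\eqref{eq:change2} yields a new integral of $D_l D^{\bm\alpha} g\cdot \rho$ plus a single boundary term of degree $(a,b,c)=(1,1,0)$, using $D_k\psi = -D_k\phi/D_j\phi$ from Theorem~\ref{thm:IFT}. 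Differentiating an existing term of form \eqref{eq:hy} via the product and chain rules produces a finite sum of terms of the same form. A careful count shows that each application of $D_l$ can increase $a$ by at most $2$ and $b$ by at most $2$---the maximal contribution arises from $D_l(D_j\phi)^{-b} = b(D_j\phi)^{-b-2}(D_{jj}\phi)(D_l\phi) - b(D_j\phi)^{-b-1}(D_{jl}\phi)$ after substituting $D_l\psi = -D_l\phi/D_j\phi$---and can increase $c$ by at most $1$ (only from differentiating $\rho^{(c)}(\psi)$). This preserves the bounds $a,b\le 2(k+1)-1$, $c\le k$, and $\abs{\bm\alpha^{(i)}}\le k+1$. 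The estimate \eqref{eq:bound4dfj} then follows by the triangle inequality.

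Finally, to promote these local formulas to a $\mathcal{C}^r$ function on $\R^{d-1}$, it remains to verify matching at every boundary point of $U_j$, which necessarily lies in $U_j^+$ or $U_j^-$ since all three sets are open and disjoint. Along any sequence $\bm y^{(n)}\in U_j$ with $\bm y^{(n)}\to \bm y^*\in U_j^+$, continuity of $\phi$ forces $\abs{\psi(\bm y^{(n)})}\to\infty$ (a subsequential finite limit $x_j^*$ would give $\phi(x_j^*,\bm y^*)=0$, contradicting $\bm y^*\in U_j^+$). Hence each $h_{\bm\alpha,i}(\bm y^{(n)})\to 0$ by hypothesis \eqref{eq:bounds}, while the one-sided integral of $(D^{\bm\alpha} g)\rho$ converges to $\int_{-\infty}^{\infty} (D^{\bm\alpha} g)\rho\mrd x_j$ (or to $0$ when $\bm y^*\in U_j^-$) by dominated convergence plus the uniform-convergence hypothesis, matching the formula on the adjacent set. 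The main obstacle I anticipate is the inductive structural claim: one must verify that the product-and-chain-rule expansion of \eqref{eq:hy} produces only terms of the required form with parameters in the prescribed ranges, and ensure that the uniform-convergence hypothesis propagates through each successive application of Theorem~\ref{thm:changes}.
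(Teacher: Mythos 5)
Your proposal follows essentially the same route as the paper's proof: the decomposition of $\R^{d-1}$ into $U_j$, $U_j^+$, $U_j^-$, the reduction of $P_jf$ on $U_j$ to a one-sided improper integral with limit $\psi(\bm y)$ via the implicit function theorem, the induction on $\abs{\bm\alpha}$ using the Leibniz rule to generate the boundary terms $h_{\bm\alpha,i}$, and the gluing across $\partial U_j$ via $\abs{\psi(\bm y)}\to\infty$ together with condition \eqref{eq:bounds}. Your explicit parameter count (each differentiation raises $a$ and $b$ by at most $2$ and $c$ by at most $1$) is consistent with the paper's stated ranges and with its worked-out second-order expression, so the argument is correct as outlined.
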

\begin{proof}
	This proof benefits largely from the proof of Theorem 1 in \cite{grie:2016}. The implicit function theorem guarantees the existence of the solution of $\phi(x_j,\bm y)=0$ for any $\bm  y\in U_j$. Without loss of generality, we suppose that $(D_j\phi)(\bm x)>0$ in Assumption~\ref{assump1}. This implies that $\phi(x_j,\bm y)$ is an increasing function with respect to $x_j$ for given $\bm y$. We then have
	\begin{equation*}
	\{\bm x|\phi(\bm x)\geq 0\}=\{\bm x|x_j\geq \psi(\bm y) \text{ for all }\bm y\in U_j\},
	\end{equation*}
	where $\psi\in \mathcal{C}^r(U_j)$.
	So the function $P_j f$ can be rewritten as
	\begin{equation*}\label{eq:smoothfn}
	(P_j f)(\bm y)=\displaystyle\begin{cases}
	\displaystyle\int_{-\infty}^{\infty}g(x_j,\bm y)\rho(x_j)\mrd x_j,&\ \bm y\in U_j^+\\
	\displaystyle\int_{\psi(\bm y)}^{\infty}g(x_j,\bm y)\rho(x_j)\mrd x_j,&\ \bm y\in U_j\\
	0,&\ \bm y\in U_j^-.
	\end{cases}
	\end{equation*}
	
	Let us consider the partial derivative of $P_j f(\bm y)$ for $\bm y\in U_j$. For $k\neq j$, applying the  Leibniz  rule \eqref{eq:change2} gives
	\begin{equation}\label{eq:dk}
	(D_k P_j f)(\bm y)=\int_{\psi(\bm y)}^{\infty}(D_k g)(x_j,\bm y)\rho(x_j)\mrd x_j-g(\psi(\bm y),\bm y)\rho(\psi(\bm y))(D_k \psi)(\bm y),
	\end{equation}
	which is continuous on $U_j$.
	The implicit function theorem admits
	\begin{equation*}
	(D_k \psi)(\bm y)=-\frac{(D_k \phi)(\psi(\bm y),\bm y)}{(D_j \phi)(\psi(\bm y),\bm y)}.
	\end{equation*}
	Thus \eqref{eq:dk} turns out to be
	\begin{equation*}
	(D_k P_j f)(\bm y)=\int_{\psi(\bm y)}^{\infty}(D_k g)(x_j,\bm y)\rho(x_j)\mrd x_j+g(\psi(\bm y),\bm y)\rho(\psi(\bm y))\frac{(D_k \phi)(\psi(\bm y),\bm y)}{(D_j \phi)(\psi(\bm y),\bm y)}.
	\end{equation*}
	Similarly, for $\ell\neq j$, we have
	\begin{equation*}\label{eq:dl}
	(D_\ell D_k P_j f)(\bm y)=\int_{\psi(\bm y)}^{\infty}(D_\ell D_k g)(x_j,\bm y)\rho(x_j)\mrd x_j+A(\psi(\bm y),\bm y),
	\end{equation*}
	where 
	\begin{align*}
	A(x_j,\bm y)&=(D_kg)(x_j,\bm y)\rho(x_j)\frac{(D_\ell \phi)(x_j,\bm y)}{(D_j \phi)(x_j,\bm y)}\\
	&+(D_\ell g)(x_j,\bm y)\rho(x_j)\frac{(D_k \phi)(x_j,\bm y)}{(D_j \phi)(x_j,\bm y)}\\
	&-(D_j g)(x_j,\bm y)\rho(x_j)\frac{(D_k \phi)(x_j,\bm y)(D_\ell \phi)(x_j,\bm y)}{[(D_j \phi)(x_j,\bm y)]^2}\\
	&-g(x_j,\bm y)p'(x_j)\frac{(D_k \phi)(x_j,\bm y)(D_\ell \phi)(x_j,\bm y)}{[(D_j \phi)(x_j,\bm y)]^2}\\
	&+g(x_j,\bm y)\rho(x_j)\frac{(D_\ell D_k \phi)(x_j,\bm y)}{(D_j \phi)(x_j,\bm y)}\\
	&-g(x_j,\bm y)\rho(x_j)\frac{(D_j D_k \phi)(x_j,\bm y)(D_\ell \phi)(x_j,\bm y)}{[(D_j \phi)(x_j,\bm y)]^2}\\
	&-g(x_j,\bm y)\rho(x_j)\frac{(D_k \phi)(x_j,\bm y)(D_\ell D_j \phi)(x_j,\bm y)}{[(D_j \phi)(x_j,\bm y)]^2}\\
	&+g(x_j,\bm y)\rho(x_j)\frac{(D_k \phi)(x_j,\bm y)(D_\ell \phi)(x_j,\bm y)(D_j D_j \phi)(x_j,\bm y)}{[(D_j \phi)(x_j,\bm y)]^3}.
	\end{align*}
	In general, for every multi-index $\bm \alpha$ with $\abs{\bm \alpha}\leq r$ and $\alpha_j=0$, one can conclude by induction on $\abs{\bm \alpha}$ that
	\begin{equation*}\label{eq:du}
	(D^{\bm \alpha} P_j f)(\bm y)=\int_{\psi(\bm y)}^{\infty}(D^{\bm \alpha} g)(x_j,\bm y)\rho(x_j)\mrd x_j+\sum_{i=1}^{M_{\abs{\bm \alpha}}} h_{\bm \alpha,i}(\bm y),
	\end{equation*}
	where $M_{\abs{\bm \alpha}}$ is a nonnegative integer depending on $\abs{\bm \alpha}$, and each function $h_{\bm \alpha,i}$ has the form \eqref{eq:hy} 
	with integers $\beta,a,b,c$   and multi-indices $\bm \alpha^{(i)}$  satisfying $1\le a\leq 2\abs{\bm \alpha}-1$, $1\le b\le 2\abs{\bm \alpha}-1$, $0\le c\le \abs{\bm \alpha}-1$, $\abs{\bm\alpha^{(i)}}\le \abs{\bm \alpha}$. Also,  $(D^{\bm \alpha} P_j f)(\bm y)$ is continuous on $U_j$.
	
	For $\bm y\in \mathrm{interior}(U_j^+)$,  applying Theorem~\ref{thm:changes} gives 
	$$(D^{\bm \alpha} P_j f)(\bm y)=\int_{-\infty}^{\infty}(D^{\bm \alpha} g)(x_j,\bm y)\rho(x_j)\mrd x_j,$$ 
	which is continuous on 	$\mathrm{interior}(U_j^+)$. 
	For $\bm y\in \mathrm{interior}(U_j^-)$, we have $(D^{\bm \alpha} P_j f)(\bm y)=0$. 
	
	Note that  $$U_j^-=\left\lbrace\bm y\in \R^{d-1}\bigg|\lim_{x_j\to \infty}\phi(x_j,\bm y)\le 0\right\rbrace.$$
	As a result, $\psi(\bm y)\to \infty$ as $\bm y$ approaches a boundary point of $U_j$ lying in $U_j^-$. Also,
	$$U_j^+=\left\lbrace\bm y\in \R^{d-1}\bigg|\lim_{x_j\to -\infty}\phi(x_j,\bm y)\ge 0\right\rbrace.$$
	Similarly, $\psi(\bm y)\to -\infty$ as $\bm y$ approaches a boundary point of $U_j$ lying in $U_j^+$. It then follows the condition~\eqref{eq:bounds}  that $D^{\bm \alpha} P_j f$ is continuous across the boundaries between $U_j$, $U_j^+$ and $U_j^-$. As a result, $P_j f\in \mathcal{C}^{r}(\R^{d-1})$ and the inequality \eqref{eq:bound4dfj} holds immediately. 
\end{proof}

When $U_j=\varnothing$, it reduces to the smooth scheme, i.e., $f(\bm x)=g(\bm x)$ or $f(\bm x)=0$. When $U_j=\R^{d-1}$, $U_j^+=U_j^-=\varnothing$. For the two extreme cases, the condition \eqref{eq:bounds} is satisfied automatically.   
We should note that Assumption~\ref{assump1} is critical to ensure a good smoothing effect of conditioning. As we will see in Section~\ref{sec:ex}, if Assumption~\ref{assump1} is violated, $P_j f$ is just continuous, but not differentiable. The uniform convergence is also critical in establishing Theorem~\ref{thm:grie}. 
The simplest standard test of the uniform convergence of an improper integral with parameters is the Weierstrass  test (see, e.g., \cite{buck:2003}), which will be used for the CQMC error analysis.

\begin{theorem}[Weierstrass  Test]
	Let $f(t,\bm x)$ be a function defined on $\R\times \Theta$. If there exists a function $F(t)$ defined on $\R$ such that $\sup_{\bm x\in \Theta}\abs{f(t,\bm x)}\le F(t)$ for all $t\in \R$ and $\int_{-\infty}^{\infty}F(t)\mrd t<\infty$, then $\int_{-\infty}^{\infty}f(t,\bm x)\mrd t$ converges uniformly on $\Theta$.
\end{theorem}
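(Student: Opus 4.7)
The plan is to reduce the claim directly to the tail behavior of the dominating integral $\int_{-\infty}^{\infty} F(t)\mrd t$. Since this integral converges (and $F$ is nonnegative, being an upper bound on absolute values), the standard Cauchy criterion for improper integrals guarantees that its tails become arbitrarily small: for any $\epsilon>0$ there exists $A_0=A_0(\epsilon)>0$ such that
\begin{equation*}
\int_{\abs{t}>A} F(t)\mrd t<\epsilon
\end{equation*}
for all $A>A_0$. Crucially, $A_0$ depends only on $\epsilon$ and $F$, not on the parameter $\bm x$, which is exactly the uniformity that the conclusion demands.

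Next I would transfer this tail bound to $f(t,\bm x)$ via the pointwise domination hypothesis. For every fixed $\bm x\in\Theta$ and every $t\in\R$, $\abs{f(t,\bm x)}\le\sup_{\bm y\in\Theta}\abs{f(t,\bm y)}\le F(t)$. Hence for any $A>A_0$ and any $\bm x\in\Theta$,
\begin{equation*}
\abs[\Big]{\int_{\abs{t}>A} f(t,\bm x)\mrd t}\le \int_{\abs{t}>A}\abs{f(t,\bm x)}\mrd t\le \int_{\abs{t}>A} F(t)\mrd t<\epsilon.
\end{equation*}
Since $A_0$ was chosen independently of $\bm x$, this is exactly the definition of uniform convergence on $\Theta$ from the earlier definition in the paper.

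A minor technical point to address before invoking the inequality above is that $\int_{-\infty}^{\infty} f(t,\bm x)\mrd t$ should at least exist as an improper integral for each fixed $\bm x\in\Theta$; this is automatic from the same domination, since $F$ is integrable and $\abs{f(\cdot,\bm x)}\le F$ implies absolute (hence ordinary) convergence of the parameter integral pointwise in $\bm x$. I do not expect any serious obstacle: the whole argument is a one-line application of the Cauchy criterion plus monotonicity of the integral, and the only thing one must be careful about is to exhibit the threshold $A_0$ chosen from $F$ alone, so that it works simultaneously for all $\bm x\in\Theta$.
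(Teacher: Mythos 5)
Your proposal is correct and follows essentially the same argument as the paper: choose $A_0$ from the integrability of $F$ alone so that the tail $\int_{\abs{t}>A}F(t)\mrd t<\epsilon$, then dominate $\abs{\int_{\abs{t}>A}f(t,\bm x)\mrd t}$ by that tail uniformly in $\bm x$. The extra remark on pointwise existence of the parameter integral is a harmless (and valid) addition.
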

\begin{proof}
	Since $F(t)$ is integrable, for any $\epsilon>0$, there exists a constant $A_0$ such that $$\int_{\abs{t}>A} F(t) \mrd t<\epsilon$$ holds for any $A>A_0$. As a result,
	\begin{equation*}
	\abs{\int_{\abs{t}>A}f(t,\bm x)\mrd t}\le \int_{\abs{t}>A} F(t) \mrd t<\epsilon.
	\end{equation*}
	The uniform convergence immediately stands because $A_0$ depends on $\epsilon$ but not $\bm x$.
\end{proof}

Griewank et al. \cite{grie:2017} considered the isotropic Sobolev space with weight functions which  generalizes the setting in \cite{grie:2013}. In the following error analysis, we only require the existence of the mixed derivatives of $P_j f$ up to order $d-1$ and hence the uniform convergence conditions in Theorem~\ref{thm:grie} are sufficient.

\section{Error Analysis for CQMC}\label{sec:main}
Under the transformation $\bm x = \Phi^{-1}(\bm u)$, $f(\bm x)$ given by \eqref{eq:integrand} is then changed to
\begin{equation*}
q(\bm u):=f( \Phi^{-1}(\bm u)) = g( \Phi^{-1}(\bm u))\I{\bm u\in \Omega},
\end{equation*}
where 
\begin{equation}\label{eq:omega}
\Omega =  \{\bm u|\phi(\Phi^{-1}(\bm u))\ge 0\}\subseteq [0,1]^d.
\end{equation}
The function $q$ may have singularities along boundary of the unit cube $[0,1]^{d}$. He \cite{he:2017} showed that under certain conditions, RQMC integration with the function $q$ yields a mean error of $O(n^{-1/2-1/(4d-2)+\epsilon})$ for arbitrarily small $\epsilon>0$. The rate for discontinuous functions deteriorates quickly as the dimension $d$ goes up. As we will see, smoothing the integrand is a promising way to improve QMC accuracy.

Let $q_j(\bm u_{-j}):=P_j f(\Phi^{-1}(\bm u_{-j}))$. Although conditioning leads to a smooth effect, the smoothed function $q_j$ may also have singularities along boundary of the unit cube $[0,1]^{d-1}$. 
For functions satisfying the boundary growth condition (defined below), Owen \cite{owen:2006} found a mean error rate for RQMC and a similar error rate for Halton sequence. We are going to give conditions on $f$ and $\rho$ such that the function $q_j$ satisfies the boundary growth condition. The convergence rates in Owen \cite{owen:2006} can therefore be applied to the CQMC estimate.  

In this paper, we focus on RQMC integration using  scrambled $(t,s)$-sequences in base $b\ge 2$  proposed by Owen \cite{owen:1995} as inputs.  Here we do not restrict that $s=d$ because sometimes $s<d$ refers to the dimension of the CQMC estimate. In what follows, we assume that the points $\bm u_1,\dots,\bm u_n$ in the quadrature rule $\hat{I}$ defined by \eqref{eq:qmc} are the first $n$ points of a scrambled $(t,s)$-sequence in base $b\ge 2$. The error analysis for deterministic QMC integration with Halton sequence is similar.

\begin{definition}
	A function $g(\bm u)$ defined on $(0,1)^s$ is said to satisfy the boundary growth condition if
	\begin{equation}\label{eq:grow}
	\abs{D_{v}g(\bm u)}\leq B\prod_{i\in v}\min(u_i,1-u_i)^{-A_i-1}\prod_{i\notin v}\min(u_i,1-u_i)^{-A_i}
	\end{equation}
	holds for some $A_i>0$, $B<\infty$ and all $v\subseteq 1{:}s$.	
\end{definition}

\begin{proposition}\label{prop:owen}
	Let $g$ be a function defined over $(0,1)^s$, and let $\bm u_1,\dots,\bm u_n$ be the first $n$ points of a scrambled $(t,s)$-sequence in base $b\ge 2$.  If $g$ satisfies the boundary growth condition \eqref{eq:grow} with $\max_i A_i<1$, then
	\begin{equation}\label{eq:prop1}
	\E{\abs{\frac{1}{n}\sum_{i=1}^ng(\bm u_i)-\int_{(0,1)^s}g(\bm u)\mrd \bm u}}=O(n^{-1+\max_{i\in 1{:}d}A_i+\epsilon}),
	\end{equation}
	for arbitrarily small $\epsilon>0$.
\end{proposition}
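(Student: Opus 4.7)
The plan is to apply the scrambled-net framework of Owen (2006), which treats precisely this class of functions with boundary singularities. The strategy is to split $(0,1)^s$ into an interior box $K$ on which the boundary growth condition \eqref{eq:grow} yields bounded mixed derivatives, and a thin boundary layer, then combine a Koksma-Hlawka style bound on $K$ with a direct estimate of the contribution from the layer.

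First, I would fix widths $\delta_1,\dots,\delta_s\in(0,1/2)$ to be optimized in terms of the $A_i$ and let $K=\prod_{i=1}^s[\delta_i,1-\delta_i]$. On $K$, the condition \eqref{eq:grow} uniformly bounds every mixed partial $D_v g$ by $B\prod_{i\in v}\delta_i^{-A_i-1}\prod_{i\notin v}\delta_i^{-A_i}$, so the truncation $g\cdot \I{\bm u\in K}$ has finite Hardy-Krause variation controlled by an explicit function of the $\delta_i$ and $A_i$. Combining this variation bound with the expected star discrepancy $O(n^{-1}(\log n)^{s-1})$ of the first $n$ points of a scrambled $(t,s)$-sequence yields a mean RQMC error on the interior that depends on $n$ and the $\delta_i$.

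Second, I would bound the contribution from the boundary layer $(0,1)^s\setminus K$. The case $v=\varnothing$ of \eqref{eq:grow} gives the pointwise bound $\abs{g(\bm u)}\le B\prod_i\min(u_i,1-u_i)^{-A_i}$, which is integrable precisely when $\max_i A_i<1$; the true integral over the layer is then of order $\sum_i\delta_i^{1-A_i}$. For the sampled sum I would dyadically decompose each slab $\{u_i<\delta_i\}$ into shells $\{2^{-\ell-1}\delta_i\le u_i<2^{-\ell}\delta_i\}$ and apply Owen's mean box-count estimates for scrambled $(t,s)$-sequences to show that, in expectation, the cubature sum on the boundary is of the same order as the true integral, up to extra $(\log n)^{s-1}$ factors.

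Finally, I would optimize the widths. Careful (anisotropic) tuning of the $\delta_i$ balances the interior and boundary contributions and produces a dominant term of order $n^{-1+\max_i A_i+\epsilon}$, which is exactly \eqref{eq:prop1}. The hard part will be the second step: transferring the $L^1$ bound on $g$ near the boundary to a mean bound on the scrambled cubature sum requires the multiresolution variance estimates for scrambled $(t,s)$-sequences developed in Owen (2006), together with the fact that with high probability no sample point lies excessively close to the faces $u_i=0$ or $u_i=1$. Since that machinery is already worked out there, the cleanest route is to invoke Owen's theorem directly: the hypotheses match \eqref{eq:grow} under the assumption $\max_i A_i<1$, and the conclusion is precisely \eqref{eq:prop1}.
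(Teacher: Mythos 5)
Your proposal is correct and ultimately takes the same route as the paper, which simply invokes Theorem 5.7 of Owen (2006); your concluding paragraph is exactly that citation, and the hypotheses do match. One minor remark on your sketch: Owen's actual argument uses a low-variation \emph{extension} of $g$ from the interior box rather than the truncation $g\cdot\I{\bm u\in K}$ (whose indicator factor would itself spoil the Hardy--Krause variation bound), but since you defer to his theorem this does not affect the validity of your proof.
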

\begin{proof}
	See Theorem 5.7 of \cite{owen:2006}.
\end{proof}

\begin{remark}\label{rem:scrambling}
	As remarked in \cite{owen:2006}, the rate in \eqref{eq:prop1} holds as well for the space efficient alternative scrambling proposed by Matou{\v{s}}ek \cite{mato:1998}. As a result, the mean error rates established in the following also hold for the scrambling method of Matou{\v{s}}ek \cite{mato:1998}, which is implemented in the toolbox of MATLAB. 
\end{remark}
\begin{remark}\label{rem:rate}
	If all the growth rates $A_i$ are arbitrarily small, we arrive at the optimal rate $O(n^{-1+\epsilon})$. The constant $\epsilon$ in \eqref{eq:prop1} is used for  hiding the logarithmic term $(\log n)^s$, which depends on the dimension $s$ of the problem. As a result, the dimension $s$ may has an important impact on the QMC efficiency, even for the optimal case of the growth rates.
\end{remark}

The boundary growth condition is critical in establishing the error rate for smooth integrands with singularities at the boundary of the unit cube. It actually requires the existence of the mixed partial derivatives of the integrands. By the chain rule, we have 
\begin{equation*}
D_{v}q_j(\bm u_{-j})=(D_{v}P_j f)(\bm y)\prod_{i\in v}\frac{\mrd \Phi^{-1}(u_i)}{\mrd u_i},
\end{equation*}
where $\bm y=\bm x_{-j}=\Phi^{-1}(\bm u_{-j})$.
We next give conditions such that the mixed partial derivatives $D_{v}P_j f$ exist for any $v\subseteq 1{:}d\backslash \{j\}$ and then the boundary growth condition for $q_j$ holds.

\begin{assumption}\label{assum:grc}
	Suppose that the integers $j\in 1{:}d$ and $r\ge 1$ are fixed. There exist constants $L>0$ and $B_i\in (0,1)$ such that 
	\begin{align}
	\abs{(D^{\bm \alpha}g)(\bm x)}&\leq L\prod_{i=1}^d \min(\Phi(x_i),1-\Phi(x_i))^{-B_i},\label{eq:dgup}\\
	\abs{(D^{\bm \alpha}\phi)(\bm x)}&\leq L\prod_{i=1}^d \min(\Phi(x_i),1-\Phi(x_i))^{-B_i},\label{eq:dphi}\\	
	\abs{(D_j\phi)(\bm x)}^{-b}&\leq L\prod_{i=1}^d \min(\Phi(x_i),1-\Phi(x_i))^{-B_i}\label{eq:dphib}
	\end{align}
	hold for $1\le b\le 2r-1$ and any multi-index $\bm \alpha$ satisfying $\abs{\bm \alpha}\le r$.	
\end{assumption}

The parameter $r$ can be viewed as a measure of the smoothness. Condition~\eqref{eq:dgup} ensures that $\E{\abs{f(\bm x)}}<\infty$ because all $B_i<1$.
We show in Section~\ref{sec:ex} that Assumption~\ref{assum:grc} is satisfied with arbitrarily small $B_i>0$ for several typical examples from financial engineering.

\begin{assumption}\label{assum:tails}
	Let $\rho\in \mathcal{C}^{r-1}(\R)$ be a strictly positive PDF, and let $\Phi(x)=\int_{-\infty}^x\rho(x)\mrd x$ be the associated CDF. Assume that there exits constants $B,L'>0$ such that \begin{equation}\label{eq:finvbound}
	\frac{\mrd \Phi^{-1}(u)}{\mrd u}\le L'\min(u,1-u)^{-1-B}.
	\end{equation}
	Assume also that for any nonnegative integer $c\le r-1$ and any $a\in(0,1)$,
	\begin{equation}\label{eq:lim1}
	\lim_{x\to \infty}\frac{\rho^{(c)}(x)}{(1-\Phi(x))^a}=0, \text{ and}
	\end{equation}
	\begin{equation}\label{eq:lim2}
	\lim_{x\to -\infty}\frac{\rho^{(c)}(x)}{\Phi(x)^a}=0.
	\end{equation}
\end{assumption}

The next lemma shows that Assumption~\ref{assum:tails} is  satisfied with arbitrarily small $B>0$ for the standard normal distribution. 

\begin{lemma}\label{lem:normal}
	If $\rho$ is the density of the standard normal distribution, i.e., $$\rho(x)=\frac{1}{\sqrt{2\pi}}\exp(-x^2/2),$$ then Assumption~\ref{assum:tails} is satisfied with arbitrarily small $B>0$ and any $r\geq 1$.
\end{lemma}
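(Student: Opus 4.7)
The plan is to verify each of the three parts of Assumption~\ref{assum:tails} using standard tail estimates for the Gaussian density and CDF. The crucial tool is Mills' ratio: as $x\to\infty$,
\begin{equation*}
1-\Phi(x)= \frac{\rho(x)}{x}\bigl(1+O(x^{-2})\bigr),
\end{equation*}
together with the fact that $\rho^{(c)}(x)=(-1)^c H_c(x)\rho(x)$, where $H_c$ denotes the $c$-th (probabilists') Hermite polynomial, so $|\rho^{(c)}(x)|$ is bounded by a polynomial in $|x|$ times $\rho(x)$.

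To verify \eqref{eq:lim1}, I would fix $a\in(0,1)$ and $c\le r-1$ and write, for $x$ large,
\begin{equation*}
\frac{|\rho^{(c)}(x)|}{(1-\Phi(x))^a}\le C_1|H_c(x)|\,\frac{\rho(x)}{(\rho(x)/x)^a}=C_1|H_c(x)|\,x^a\,\rho(x)^{1-a}.
\end{equation*}
Since $\rho(x)^{1-a}=(2\pi)^{-(1-a)/2}\exp(-(1-a)x^2/2)$ with $1-a>0$, the exponential decay dominates any polynomial factor, so the right-hand side tends to $0$. The limit \eqref{eq:lim2} then follows immediately by the symmetries $\rho(-x)=\rho(x)$ and $\Phi(-x)=1-\Phi(x)$, which imply $\rho^{(c)}(-x)=(-1)^c\rho^{(c)}(x)$.

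To verify \eqref{eq:finvbound}, I would use $d\Phi^{-1}(u)/du=1/\rho(\Phi^{-1}(u))$. Fix any compact $[\delta,1-\delta]\subset(0,1)$: on it, $\rho(\Phi^{-1}(u))$ is bounded below and so the derivative is bounded. For $u\to 1^-$, set $x=\Phi^{-1}(u)\to\infty$; Mills' ratio gives
\begin{equation*}
\frac{1}{\rho(x)}=\frac{1+o(1)}{x(1-u)}\le \frac{C_2}{1-u},
\end{equation*}
eventually, because $x\to\infty$. By symmetry, the analogous bound $\le C_2/u$ holds as $u\to 0^+$. Combining the three regions gives $d\Phi^{-1}(u)/du\le L'\min(u,1-u)^{-1}\le L'\min(u,1-u)^{-1-B}$ for every $B\ge 0$, and in particular for any arbitrarily small $B>0$.

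The argument presents no real obstacle; its substance is the Mills ratio together with the super-polynomial decay of $\rho(x)^{1-a}$. The only place that calls for a bit of care is patching together the three regimes (near $0$, in the middle, and near $1$) for the inverse CDF bound, but this is routine once one notes $1-u\le 1$, which lets one absorb any constant $C_2$ into $L'$ and promote the exponent from $-1$ to $-1-B$.
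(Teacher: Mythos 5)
Your argument is correct, and for conditions \eqref{eq:lim1}--\eqref{eq:lim2} it is essentially the paper's: the paper writes $\rho^{(c)}(x)$ as a linear combination of terms $x^k\rho(x)$ (your Hermite-polynomial identity in other words) and uses Gordon's lower bound $1-\Phi(x)>\rho(x)/(x+1/x)$ in place of your asymptotic Mills ratio; either way the factor $\rho(x)^{1-a}$ with $1-a>0$ kills the polynomial. The one genuine difference is in \eqref{eq:finvbound}: the paper expands $\Phi^{-1}(\epsilon)=-\sqrt{-2\log\epsilon}+o(1)$ and checks directly that $\bigl(\mathrm{d}\Phi^{-1}/\mathrm{d}u\bigr)\,u^{1+B}\to 0$ for every $B>0$, whereas you feed Mills' ratio into $1/\rho(\Phi^{-1}(u))$ and obtain the bound $C_2/(1-u)$ outright. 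Your route is slightly cleaner and in fact stronger --- it shows the derivative is $O(\min(u,1-u)^{-1})$, i.e.\ the condition would hold even with $B=0$ --- while the paper's computation only delivers the exponent $-1-B$ for positive $B$, which is all the assumption requires. Both are routine Gaussian tail estimates; nothing of substance is missing from your write-up beyond the trivial observations that $\rho\in\mathcal{C}^{\infty}(\R)$ and $\rho>0$, which the assumption also asks for and which the paper records in one line.
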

\begin{proof}
	It is easy to see that $\rho\in\mathcal{C}^\infty(\R)$ and $\rho(x)>0$ for all $x\in\R$. Let $\Phi$ be the CDF of the standard normal distribution. Note that
	\begin{equation}\label{eq:norminv}
	\begin{cases}
	\Phi^{-1}(\epsilon)&=-\sqrt{-2\log(\epsilon)}+o(1)\\
	\Phi^{-1}(1-\epsilon)&=\sqrt{-2\log(\epsilon)}+o(1)
	\end{cases}
	\end{equation}
	as $\epsilon\downarrow 0$ (see Chapter 3.9 of \cite{pete:read:1996}). We fine that
	\begin{align*}
	\frac{\mrd \Phi^{-1}(u)}{\mrd u}=\frac{1}{\rho(\Phi^{-1}(u))}= \sqrt{2\pi}\exp(\Phi^{-1}(u)^2/2)
	\end{align*}
	For any $B>0$, we have
	\begin{align*}
	\lim_{u\to 0+}\frac{\mrd \Phi^{-1}(u)}{\mrd u}u^{1+B}&=\lim_{u\to 0+}\sqrt{2\pi}\exp\{[-\sqrt{-2\log(u)}+o(1)]^2/2+(1+B)\log(u)\}\\&=\lim_{u\to 0+}\sqrt{2\pi}\exp\{B\log(u)-o(\sqrt{-2\log(u)})\}\\
	&=\lim_{u\to 0+}\sqrt{2\pi}\exp\{-(B/2)[\sqrt{-2\log(u)}+o(1)]^2\}\\&=0,
	\end{align*}
	and similarly,
	$$\lim_{u\to 1-}\frac{\mrd \Phi^{-1}(u)}{\mrd u}(1-u)^{1+B}=0.$$
	This gives $$\frac{\mrd \Phi^{-1}(u)}{\mrd u}=O(\min(u,1-u)^{-1-B})$$
	for any $B>0$. Gordon \cite{gord:1941} showed that $1-\Phi(x)>\rho(x)/(x+1/x)$ for $x>0$. For $a\in (0,1)$ and any nonnegative integer $k$, we have
	\begin{equation*}
	\lim_{x\to \infty}\frac{x^k\rho(x)}{(1-\Phi(x))^a}\le \lim_{x\to \infty}\frac{x^k\rho(x)}{(\rho(x)/(x+1/x))^a}=\lim_{x\to \infty}x^k(x+1/x)^a\rho(x)^{1-a}=0.
	\end{equation*}
	Note that $\rho^{(c)}(x)$ is a linear combination of some terms of the form $x^k\rho(x)$ with $k\le c$. We therefore obtain \eqref{eq:lim1}. The equality \eqref{eq:lim2} can be obtained by replacing $x$ with $-x$ in \eqref{eq:lim1}.
\end{proof}

\begin{theorem}\label{eq:mainj}
	Suppose that Assumptions~\ref{assump1}, \ref{assum:grc} and \ref{assum:tails}  are satisfied with fixed constants $B_i\in(0,1)$, $B>0$, $r\ge 1$ and $j\in1{:}d$.  Suppose that $f$ is given by \eqref{eq:integrand} with $g,\phi\in \mathcal{C}^r(\R^d)$.
	\begin{itemize}
		\item If $B_j<1/(2r+1)$, then $P_j f\in\mathcal{C}^r(\R^{d-1})$.
		\item Suppose that $r\ge d-1$ and $B_j<1/(2d-1)$. If $$\gamma_j:=(2d-1)\max_{i\in 1{:}d\backslash\{j\}} B_i+B<1,$$ then 
		\begin{equation}\label{eq:ratej}
		\E{|\hat{I}(P_j f)-{I}(f)|}=O(n^{-1+\gamma_j
			+\epsilon})
		\end{equation}
		for arbitrarily small $\epsilon>0$. 
		\item  Suppose that $r\ge d-1$. If $B_1,\dots,B_d$ and $B$ are arbitrarily small, then 
		\begin{equation*}\label{eq:optratej}
		\E{|\hat{I}(P_j f)-{I}(f)|}=O(n^{-1+\epsilon}).
		\end{equation*}
	\end{itemize}    
\end{theorem}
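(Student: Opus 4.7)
The plan is to establish the three bullets sequentially, with the third following immediately from the second. The first bullet reduces to an application of Theorem~\ref{thm:grie} once its uniform convergence and boundary hypotheses are verified; the second converts the derivative bound \eqref{eq:bound4dfj} into the boundary growth condition \eqref{eq:grow} for the transformed integrand $q_j(\bm u_{-j}):=(P_j f)(\Phi^{-1}(\bm u_{-j}))$ on $(0,1)^{d-1}$, after which Proposition~\ref{prop:owen} yields the rate; the third is an immediate corollary of the second.

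For the first bullet, I would check the hypotheses of Theorem~\ref{thm:grie}. On any ball $B(\bm y^*,\delta)\subset U_j$, the factors $\min(\Phi(x_i),1-\Phi(x_i))^{-B_i}$ for $i\neq j$ coming from \eqref{eq:dgup} are uniformly bounded, and the remaining integrand $\min(\Phi(x_j),1-\Phi(x_j))^{-B_j}\rho(x_j)$ is integrable on $\R$ because $B_j<1$, so the Weierstrass test gives the required uniform convergence. For the boundary condition \eqref{eq:bounds}, combining \eqref{eq:dgup}--\eqref{eq:dphib} and counting contributions (one from the $D^{\bm\alpha^{(0)}}g$ factor, $a$ from the $D^{\bm\alpha^{(i)}}\phi$ factors, and a single power from $[D_j\phi]^{-b}$ since \eqref{eq:dphib} already bounds the entire $b$th power), any $h$ of the form \eqref{eq:hy} is dominated by
\[
C\,\rho^{(c)}(\psi(\bm y))\,\min(\Phi(\psi(\bm y)),1-\Phi(\psi(\bm y)))^{-(a+2)B_j}\prod_{i\neq j}\min(\Phi(x_i),1-\Phi(x_i))^{-(a+2)B_i}.
\]
Since $a\le 2r-1$ gives $(a+2)B_j\le (2r+1)B_j<1$, and $\psi(\bm y)$ diverges to $\pm\infty$ as $\bm y$ approaches a boundary point of $U_j$ lying in $U_j^\mp$, the tail limits \eqref{eq:lim1}--\eqref{eq:lim2} of Assumption~\ref{assum:tails} force $h\to 0$ at those boundaries. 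Theorem~\ref{thm:grie} then gives $P_j f\in\mathcal{C}^r(\R^{d-1})$.

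For the second bullet, with $r\ge d-1$ every $v\subseteq 1{:}d\setminus\{j\}$ satisfies $|v|\le d-1\le r$, so $D_v P_j f$ exists and obeys \eqref{eq:bound4dfj}. The chain rule gives $D_v q_j(\bm u_{-j})=(D_v P_j f)(\Phi^{-1}(\bm u_{-j}))\prod_{i\in v}\mrd\Phi^{-1}(u_i)/\mrd u_i$, with the inverse-derivative factor bounded by \eqref{eq:finvbound}. The integral term in \eqref{eq:bound4dfj} contributes at most $C\prod_{i\neq j}\min(u_i,1-u_i)^{-B_i}$ after integrating out $x_j$, while the same counting as above bounds each $h_{\bm\alpha,i}$ by $C'\prod_{i\neq j}\min(u_i,1-u_i)^{-(2|v|+1)B_i}$, the $\psi$-dependent prefactor being globally bounded on $U_j$ (continuous on the interior and vanishing at $\partial U_j$ by Assumption~\ref{assum:tails}). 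Using $|v|\le d-1$ and the identity $\min(\Phi(x_i),1-\Phi(x_i))=\min(u_i,1-u_i)$ yields \eqref{eq:grow} with $A_i=(2d-1)B_i+B$, so $\max_i A_i\le \gamma_j<1$ and Proposition~\ref{prop:owen} delivers \eqref{eq:ratej}. The third bullet is then immediate: taking $B_1,\dots,B_d,B$ arbitrarily small drives $\gamma_j$ arbitrarily small. The hard part will be the careful bookkeeping of exponents in the $h_{\bm\alpha,i}$ estimate and the claim that the $\psi$-dependent prefactor is \emph{globally} bounded on $U_j$, which requires combining interior continuity with the boundary vanishing supplied by Assumption~\ref{assum:tails}; this is the mechanism by which the smoothing effect of conditioning produces an error rate governed only by the $d-1$ surviving variables.
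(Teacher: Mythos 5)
Your proposal is correct and follows essentially the same route as the paper: Weierstrass test for the uniform convergence hypothesis, the exponent count $(a+2)B_j\le(2r+1)B_j<1$ combined with $\psi(\bm y)\to\pm\infty$ at the boundary and the tail limits \eqref{eq:lim1}--\eqref{eq:lim2} to verify \eqref{eq:bounds}, then the chain rule with \eqref{eq:finvbound} to get the boundary growth rates $A_i=(2d-1)B_i+B$ and Proposition~\ref{prop:owen}. The only stylistic difference is that the paper packages the $\psi$-dependent prefactor as a single function $\kappa$ and notes its boundedness from continuity plus decay at $\pm\infty$, exactly the mechanism you identify as the "hard part."
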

\begin{proof}
	Let $\bm y=\bm x_{-j}$. We first prove that for any $\bm y^*\in \R^{d-1}$, there exists a set $B(\bm y^*,\delta)$ with $\delta>0$ such that  $\int_{-\infty}^\infty D^{\bm \alpha} g(
	x_j,\bm y)\rho(x_j)\mrd x_j$ converges uniformly on the ball $B(\bm y^*,\delta)$ for any multi-index $\bm{\alpha}$ with $\abs{\bm{\alpha}}\le r$ and $\alpha_j=0$. For any $\delta>0$, from \eqref{eq:dgup}, we have
	\begin{equation*}
	\sup_{\bm y\in B(\bm y^*,\delta)}\abs{D^{\bm \alpha} g(
		x_j,\bm y)\rho(x_j)}\le M \min(\Phi(x_j),1-\Phi(x_j))^{-B_j}\rho(x_j),
	\end{equation*}
	where $M$ is a constant depending on $\delta$. Together with  $$\int_{-\infty}^{\infty}\min(\Phi(x_j),1-\Phi(x_j))^{-B_j}\rho(x_j)\mrd x_j<\infty,$$ Weierstrass test admits that $\int_{-\infty}^\infty D^{\bm \alpha} g(
	x_j,\bm y)\rho(x_j)\mrd x_j$ converges uniformly on $B(\bm y,\delta)$.

	For the function $h$ given by \eqref{eq:hy}, by 　Assumption~\ref{assum:grc}, we find that
	\begin{equation}\label{eq:hbound}
	\abs{h(\bm y)} \le\kappa(\psi(\bm y))\prod_{i\in 1{:}d\backslash \{j\}} \min(\Phi(x_i),1-\Phi(x_i))^{-(2r+1)B_i},
	\end{equation} 
	where 
	\begin{equation*}
	\kappa(x)=\abs{\beta}L^{2r+1}\min(\Phi(x),1-\Phi(x))^{-(2r+1)B_j}\rho^{(c)}(x).
	\end{equation*}
	Again $\psi(\bm y)\to \pm \infty$ as $\bm y$ approaches a boundary point of $U_j$ lying in $U_j^-$ and $U_j^+$, respectively. Since $(2r+1)B_j<1$,  it follows \eqref{eq:lim1} and \eqref{eq:lim2} that $\kappa(x)\to 0$ as $x\to \pm\infty$, leading to \eqref{eq:bounds}.
	Therefore, by Theorem~\ref{thm:grie}, we have $P_j f\in\mathcal{C}^{r}(\R^{d-1})$.

	Using \eqref{eq:dgup} again gives
	\begin{align*}
	\int_{-\infty}^{\infty}\abs{(D^{\bm\alpha} g)(x_j,\bm x_{-j})}\rho(x_j)\mrd x_j\le \frac{2^{B_j}L}{1-B_j}\prod_{i\in 1{:}d\backslash \{j\}} \min(\Phi(x_i),1-\Phi(x_i))^{-B_i},\label{eq:dgbound}
	\end{align*}
	where we used $B_j<1$. 
	Since $\kappa(x)$ is continuous over $\R$, $\kappa(x)$ is bounded. 
	According to \eqref{eq:hbound} (replacing $r$ with $\abs{\bm{\alpha}}$), 
	\begin{equation*}
	\abs{h_{\bm \alpha,i}(\bm y)} \le  M'\prod_{i\in 1{:}d\backslash \{j\}} \min(\Phi(x_i),1-\Phi(x_i))^{-(2\abs{\bm{\alpha}}+1)B_i}
	\end{equation*} 
	for finite $M'>0$.
	By \eqref{eq:bound4dfj}, we have
	\begin{equation}\label{eq:dpfbound}
	\abs{(D^{\bm \alpha} P_j f)(\bm x_{-j})}\le \tilde{L}\prod_{i\in 1{:}d\backslash \{j\}} \min(\Phi(x_i),1-\Phi(x_i))^{-(2\abs{\bm{\alpha}}+1)B_i},
	\end{equation}
	where $\tilde{L}$ is a constant. 
	
	Now assume $r\ge d-1$, and let $v\subset 1{:}d\backslash \{j\}$. Let $\bm \alpha$ be a multi-index with entries $\alpha_i=1$ for $i\in v$ and $\alpha_i=0$ otherwise. Then $\abs{\bm \alpha}=\abs{v}\le d-1$ and $D_v P_j f=D^{\bm\alpha}P_jf$.
	Let $q_j(\bm u_{-j})=P_j f(\Phi^{-1}(\bm u_{-j}))$. Using \eqref{eq:finvbound} and \eqref{eq:dpfbound}, we obtain
	\begin{align*}
	\abs{(D_vq_j)(\bm u_{-j})}&=\abs{(D_v P_j f)(\Phi^{-1}(\bm u_{-j}))}\prod_{i\in v} \frac{\mrd \Phi^{-1}(u_i)}{\mrd u_i}\\
	&\le \tilde{L}L'\prod_{i\in 1{:}d\backslash \{j\}} \min(u_i,1-u_i)^{-(2d-1)B_i}\prod_{i\in v}\min(u_i,1-u_i)^{-1-B}.
	\end{align*}
	As a result, $q_j(\bm u_{-j})$  satisfies the boundary growth condition with rates $$A_i=(2d-1)B_i+B\in (0,1),$$ for all $i\neq j$. Note that $I(P_j f)=I(f)$. Applying Proposition~\ref{prop:owen} then gives \eqref{eq:ratej}. Finally, letting all $B_i$ and $B$ be arbitrarily small, the last part holds immediately.
\end{proof}

The rate in \eqref{eq:ratej} suggests that if there is a list of candidates $x_j$ to be integrated out, we prefer to choose the one with largest $B_j$ because it delivers the best rate.

\section{ANOVA Decomposition}\label{sec:anova}
Griebel et al.~\cite{grie:2013} investigated the smoothness property for the terms of the ANOVA decomposition of functions with kink. In this section, we study the convergence rate for the RQMC integration with the ANOVA terms of discontinuous functions. The ANOVA decomposition of $f$ is given by 
\begin{equation*}
f(\bm x)=\sum_{v\subseteq 1{:}d} f_v(\bm x),
\end{equation*}
where $f_v(\bm x)$ depends only on the variables $x_j$ with indices $j\in v$, and satisfies $P_jf_v\equiv0$ for all $j\in v$.

For $v\subseteq 1{:}d$, $\bm x_v$ denotes the components of $\bm x$ with indices in $v$. Let $-v=1{:}d\backslash v$ and let $\abs{v}$ be the cardinality of the set $v$.
Generally, one can integrate \eqref{eq:integrand} with respect to $x_j$ with $j\in v$, that is,
\begin{equation*}\label{eq:condv}
(P_v f)(\bm x_{-v}):=\E{f(\bm x)|\bm x_{-v}}=\int_{\R^{\abs{v}}} f(\bm x_v,\bm x_{-v})\prod_{i\in v}\rho(x_i)\mrd \bm x_v.
\end{equation*}
We may write that $P_v=\prod_{j\in v}P_j$. Fubini's theorem allows us to take any order within the product. 
The ANOVA terms are defined through the recurrence relation
\begin{equation*}
f_v = P_{-v}f-\sum_{w\subsetneq v}f_w,
\end{equation*}
where $f_{\varnothing}=I(f)$ by convention. Kuo et al. \cite{kuo:2010} showed that the ANOVA terms can expressed explicitly by
\begin{equation}\label{eq:fanova}
f_v = \sum_{w\subseteq v} (-1)^{\abs{v}-\abs{w}}P_{-w}f.
\end{equation} 

We next pay particular attention to RQMC integration with $P_v f$ for general $v\subseteq 1{:}d$. The study of $\hat{I}(P_v f)$ below paves the way to understand the QMC error of ANOVA components of the integrand, although the projection $P_v f$ cannot be calculated analytically in practice. Note that $I(P_v f)=I(f)$.

\begin{theorem}\label{thm:pvf}
	Consider the setup in Theorem~\ref{eq:mainj}. Let $v\subseteq 1{:}d$ satisfying $j\in v$.
	\begin{itemize}
		\item If $\max_{i\in v} B_i<1/(2r+1)$, then $P_v f\in \mathcal{C}^r(\R^{d-\abs{v}})$.
		\item Suppose that $r\ge d-1$ and $\max_{i\in v} B_i<1/(2d-1)$.  If $$\gamma_v:=(2d-1)\max_{i\in 1{:}d\backslash v} B_i+B<1,$$ then 
		\begin{equation}\label{eq:ratev}
		\E{|\hat{I}(P_v f)-{I}(f)|}=O(n^{-1+\gamma_v
			+\epsilon})
		\end{equation}
		for arbitrarily small $\epsilon>0$.
		\item Suppose that $r\ge d-1$. If $B_1,\dots,B_d$ and $B$ are arbitrarily small, then \eqref{eq:ratev} holds with $\gamma_v=0$.
	\end{itemize} 
\end{theorem}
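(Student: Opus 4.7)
The strategy is to exploit Fubini's theorem, writing $P_v = P_{v\setminus\{j\}}\circ P_j$, so that the indicator-induced discontinuity is removed by the very first application of $P_j$ (the nontrivial step, handled by Theorem~\ref{eq:mainj}), while the remaining $|v|-1$ conditionings act on a smooth function and can be dispatched by ordinary calculus. First, since $\max_{i\in v}B_i<1/(2r+1)$ forces $B_j<1/(2r+1)$, Theorem~\ref{eq:mainj} applies and yields $P_jf\in\mathcal{C}^r(\R^{d-1})$ together with the bound \eqref{eq:dpfbound}, namely
\begin{equation*}
\abs{(D^{\bm\alpha}P_jf)(\bm x_{-j})}\le \tilde L\prod_{i\in 1{:}d\setminus\{j\}}\min(\Phi(x_i),1-\Phi(x_i))^{-(2\abs{\bm\alpha}+1)B_i},\quad \abs{\bm\alpha}\le r,\ \alpha_j=0.
\end{equation*}

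Then I would proceed by induction on the number of variables already integrated out of $v$, applying one additional $P_k$ with $k\in v\setminus\{j\}$ at each step. Because the integrand is now smooth, Theorem~\ref{thm:grie}'s full machinery is unnecessary and only Theorem~\ref{thm:changes} is required: the uniform convergence on an arbitrary ball of the integrals defining the derivatives follows from the Weierstrass test, using the current template bound as majorant. The envelope $\min(\Phi(x_k),1-\Phi(x_k))^{-(2\abs{\bm\alpha}+1)B_k}\rho(x_k)$ is integrable as soon as $(2\abs{\bm\alpha}+1)B_k<1$, which is precisely what the hypothesis $B_k<1/(2r+1)$ guarantees for all admissible $\bm\alpha$. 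Swapping differentiation with the integral against $\rho(x_k)$ preserves the template, merely removing the $k$-th factor from the product, so after all $|v|$ steps one obtains $P_vf\in\mathcal{C}^r(\R^{d-|v|})$ together with
\begin{equation*}
\abs{(D^{\bm\alpha}P_vf)(\bm x_{-v})}\le C\prod_{i\in 1{:}d\setminus v}\min(\Phi(x_i),1-\Phi(x_i))^{-(2\abs{\bm\alpha}+1)B_i},
\end{equation*}
for every $\bm\alpha$ with $\abs{\bm\alpha}\le r$ and $\alpha_i=0$ whenever $i\in v$, establishing the first bullet.

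For the second bullet, assume $r\ge d-1$ so that the template bound is valid for every $\bm\alpha$ with $\abs{\bm\alpha}\le d-|v|\le d-1$. Substituting $\bm x_{-v}=\Phi^{-1}(\bm u_{-v})$ and applying the chain rule together with \eqref{eq:finvbound} converts the estimate into a boundary growth bound of the form \eqref{eq:grow} for $q_v(\bm u_{-v}):=P_vf(\Phi^{-1}(\bm u_{-v}))$ on $(0,1)^{d-|v|}$, with rates that can be taken to satisfy $A_i\le (2d-1)B_i+B$ for $i\in 1{:}d\setminus v$. The condition $\gamma_v<1$ is precisely $\max_i A_i<1$, so Proposition~\ref{prop:owen} applied in the reduced dimension $d-|v|$, combined with $I(P_vf)=I(f)$, delivers the claimed mean error $O(n^{-1+\gamma_v+\epsilon})$. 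The third bullet follows immediately by letting $B_1,\dots,B_d$ and $B$ tend to zero.

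The main technical point to watch is the inductive step: one must verify that the template bound is preserved in form across each integration (same exponents, product indexed over a shrinking set) and that the Weierstrass majorant on a ball around a given $\bm y^*$ can be chosen so as to be independent of the residual free variables. Tracking the constants sharply is not needed, since only the exponents on the $\min(u_i,1-u_i)$ factors survive into the final QMC rate.
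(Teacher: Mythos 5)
Your proposal is correct and follows essentially the same route as the paper: first apply Theorem~\ref{eq:mainj} to $P_jf$ to remove the discontinuity and obtain the bound \eqref{eq:dpfbound}, then iterate the remaining conditionings in $v\setminus\{j\}$ via the Weierstrass test and Theorem~\ref{thm:changes}, with each integration removing one factor from the product bound at the cost of a finite constant (using $(2\abs{\bm\alpha}+1)B_k<1$), and finally pass to the boundary growth condition and Proposition~\ref{prop:owen}. The technical point you flag (preservation of the template bound and a ball-uniform majorant) is resolved exactly as in the paper, by taking the supremum over a ball $B(\bm x^*_{-\{j,k\}},\delta)$ to get a $\delta$-dependent constant.
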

\begin{proof}
	From Theorem~\ref{eq:mainj}, we have $P_j f\in\mathcal{C}^r(\R^{d-1})$. Now suppose that there is $k\in v$ satisfying $k\neq j$. Let $\bm{\alpha}$ be any multi-index  with $\abs{\bm{\alpha}}\le r$ and $\alpha_i=0$ for all $i\in \{j,k\}$. For any $\delta>0$ and any $\bm x_{-\{j,k\}}^*\in \R^{d-2}$, it follows from \eqref{eq:dpfbound} that 
	\begin{equation*}
	\sup_{\bm x_{-\{j,k\}}\in B(\bm x_{-\{j,k\}}^*,\delta)}\abs{(D^{\bm \alpha} P_j f)( x_k,\bm x_{-\{j,k\}})}\le M \min(\Phi(x_k),1-\Phi(x_k))^{-(2\abs{\bm{\alpha}}+1)B_k},
	\end{equation*}
	where $M$ is a constant depending on $\delta$.  Applying  Weierstrass test again gives that 
	$$(P_kD^{\bm \alpha}P_j f)(\bm x_{-\{j,k\}})=\int_{-\infty}^\infty (D^{\bm \alpha}P_j f)(x_k,\bm x_{-\{j,k\}}) \rho(x_k) \mrd  x_k$$ converges uniformly on the ball $B(\bm x_{-\{j,k\}}^*,\delta)$. By Theorem~\ref{thm:changes}, we have $D^{\bm \alpha}P_kP_j f=P_kD^{\bm \alpha}P_j f$ and hence $P_kP_j f\in \mathcal{C}^r(\R^{d-2})$. By \eqref{eq:dpfbound},
	\begin{align*}
|(D^{\bm \alpha}P_kP_j f)&(\bm x_{-\{j,k\}})|=\abs{(P_kD^{\bm \alpha}P_j f)(\bm x_{-\{j,k\}})}\\&\le \tilde{L}P_k \prod_{i\in 1{:}d\backslash \{j\}} \min(\Phi(x_i),1-\Phi(x_i))^{-(2\abs{\bm{\alpha}}+1)B_i}\\
	&=\tilde{L}\frac{2^{(2\abs{\bm{\alpha}}+1)B_k}}{1-(2\abs{\bm{\alpha}}+1)B_k} \prod_{i\in 1{:}d\backslash \{j,k\}} \min(\Phi(x_i),1-\Phi(x_i))^{-(2\abs{\bm{\alpha}}+1)B_i},
	\end{align*}
	where we used $(2\abs{\bm{\alpha}}+1)B_k<1$.
	
	Performing the same procedure recursively on all other  elements in the set $v$, one can easily show that $D^{\bm \alpha}P_v f=P_vD^{\bm \alpha} f$ and
	\begin{equation}\label{eq:dapvf}
	\abs{(D^{\bm \alpha}P_v f)(\bm x_{-v})}\le \tilde{L}\prod_{i\in v\backslash\{j\}}\frac{2^{(2\abs{\bm{\alpha}}+1)B_i}}{1-(2\abs{\bm{\alpha}}+1)B_i}\prod_{i\in 1{:}d\backslash v} \min(\Phi(x_i),1-\Phi(x_i))^{-(2\abs{\bm{\alpha}}+1)B_i},
	\end{equation}
	where  $\bm{\alpha}$ is a multi-index with $\abs{\bm{\alpha}}\le r$ and $\alpha_i=0$ for all $i\in v$. 
	Moreover, $P_vf\in \mathcal{C}^r(\R^{d-\abs{v}})$.

	We now prove the second part.  For any $w\subseteq 1{:}d\backslash v$, by \eqref{eq:dapvf}, we find that 
	\begin{align*}
	\abs{(D_w P_v f)(\bm x_{-v})}\le \tilde{L}\prod_{i\in v\backslash\{j\}}\frac{2^{(2d-1)B_i}}{1-(2d-1)B_i}\prod_{i\in 1{:}d\backslash v} \min(\Phi(x_i),1-\Phi(x_i))^{-(2d-1)B_i}.
	\end{align*}
	Let $q_v(\bm u_{-v})=P_vf(\Phi^{-1}(\bm u_{-v}))$. By \eqref{eq:finvbound}, we obtain
	\begin{align*}
	\abs{(D_wq_v)(\bm u_{-v})}&=\abs{(D_w P_v f)(\Phi^{-1}(\bm u_{-v}))}\prod_{i\in w} \frac{\mrd \Phi^{-1}(u_i)}{\mrd u_i}\\
	&\le L_0\prod_{i\in 1{:}d\backslash v} \min(u_i,1-u_i)^{-(2d-1)B_i}\prod_{i\in w}\min(u_i,1-u_i)^{-1-B}
	\end{align*}
	for some constant $L_0>0$. As a result, $q_v$  satisfies the boundary growth condition with rates $A_i=(2d-1)B_i+B\in (0,1)$ for $i\in 1{:}d\backslash v$. Note that $I(P_v f)=I(f)$. Applying Proposition~\ref{prop:owen} then gives \eqref{eq:ratev}.  Letting all $B_i$ and $B$ be arbitrarily small, the last part holds immediately.
\end{proof}

Theorem~\ref{thm:pvf} shows that $P_vf$ inherits the full smoothness of $g$ and $\phi$. An interesting point behind \eqref{eq:ratev} is that integrating more variables out does not decrease the error rate of RQMC.

\begin{theorem}\label{thm:anovarate}
	Consider the setup in Theorem~\ref{eq:mainj}. Let $f_v$ be the ANOVA term \eqref{eq:fanova} for the function $f$ given by \eqref{eq:integrand}. 
	\begin{itemize}
		\item Suppose that $j\notin v$. If $\max_{i\in 1{:}d} B_i<1/(2r+1)$, then $f_v \in\mathcal{C}^r(\R^{\abs{v}})$. If $r\ge d-1$, $\max_{i\in 1{:}d} B_i<1/(2d-1)$ and $$\gamma_{-v}=(2d-1)\max_{i\in v} B_i+B<1,$$ then
		\begin{equation*}\label{eq:ratetv}
		\E{|\hat{I}(f_v)-{I}(f_v)|}=O(n^{-1+\gamma_{-v}
			+\epsilon})
		\end{equation*} 
		for arbitrarily small $\epsilon>0$.
		\item Suppose that Assumptions~\ref{assump1} and \ref{assum:grc} are satisfied with all $j\in 1{:}d$. Then the results above hold for any $v\subsetneq 1{:}d$. Suppose that $\Omega$ is given by \eqref{eq:omega}, whose boundary $\partial \Omega$ admits a $(d-1)$-dimensional Minkowski content\footnote{See \cite{he:2017} for the formal definition. In the terminology of geometry, the Minkowski content is known as the surface area of the set $\Omega$. Clearly, the convex sets in $[0,1]^d$ satisfy this condition as their surface areas are bounded by the surface area of the unit cube, which is $2d$.}. If $r\ge d-1$, $B_1,\dots,B_d$ and $B$ are arbitrarily small, then 
		\begin{equation}\label{eq:rateanova}
		\E{|\hat{I}(f_v)-{I}(f_v)|}=
		\begin{cases}
		0, &v=\varnothing\\
		O(n^{-1+\epsilon}),&\varnothing\neq v\subsetneq 1{:}d\\
		O(n^{-1/2-1/(4d-2)+\epsilon}), &v=1{:}d.
		\end{cases}
		\end{equation}
	\end{itemize}
\end{theorem}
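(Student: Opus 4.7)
The plan is to reduce everything to Theorem~\ref{thm:pvf} via the closed-form ANOVA expansion \eqref{eq:fanova}, then handle the top-order term $f_{1{:}d}$ separately using the rate of He \cite{he:2017} for the raw discontinuous integrand.

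For the first bullet, I would start from $f_v = \sum_{w \subseteq v} (-1)^{\abs{v}-\abs{w}} P_{-w} f$ and note that $j \notin v$ forces $j \in -w$ for every $w \subseteq v$. This makes Theorem~\ref{thm:pvf} directly applicable to each summand with the fixed conditioning index $j$: each $P_{-w} f$ lies in $\mathcal{C}^r$, so $f_v$, being a finite linear combination of functions that all depend only on $\bm x_v$, lies in $\mathcal{C}^r(\R^{\abs{v}})$. For the error bound, observe that $I(P_{-w}f)=I(f)$ is independent of $w$ and $\sum_{w\subseteq v}(-1)^{\abs{v}-\abs{w}}=0$ when $v\neq\varnothing$, so the constants cancel and
\begin{equation*}
\hat{I}(f_v) - I(f_v) \;=\; \sum_{w \subseteq v} (-1)^{\abs{v}-\abs{w}} \bigl[\hat{I}(P_{-w} f) - I(P_{-w} f)\bigr].
\end{equation*}
A term-by-term $L^{1}$ triangle inequality, combined with the rate $O(n^{-1+\gamma_{-w}+\epsilon})$ from Theorem~\ref{thm:pvf}, delivers the claim once one observes that $\gamma_{-w}\le\gamma_{-v}$ whenever $w\subseteq v$ (since $\max_{i\in w}B_i\le\max_{i\in v}B_i$).

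For the second bullet, when Assumption~\ref{assump1} holds at every index, any proper subset $v\subsetneq 1{:}d$ admits at least one $j\notin v$, so the first bullet applies and yields $O(n^{-1+\epsilon})$ once the constants $B_i, B$ are made arbitrarily small. The case $v=\varnothing$ is trivial: $f_\varnothing=I(f)$ is a constant, so $\hat I(f_\varnothing)=I(f_\varnothing)$ exactly. The remaining case $v=1{:}d$ is not covered by Step~1 because no $j$ lies outside $v$; here I would instead rearrange $f_{1{:}d}=f-\sum_{w\subsetneq 1{:}d}f_w$, apply the triangle inequality, bound each $\E{|\hat I(f_w)-I(f_w)|}$ by $O(n^{-1+\epsilon})$ via the already-handled case, and invoke He \cite{he:2017}, which under the $(d-1)$-dimensional Minkowski content hypothesis on $\partial\Omega$ gives $\E{|\hat I(f)-I(f)|}=O(n^{-1/2-1/(4d-2)+\epsilon})$. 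This slower term dominates and produces \eqref{eq:rateanova}.

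The real analytic content is already packaged inside Theorem~\ref{thm:pvf} and the result of He \cite{he:2017}; what remains is combinatorial bookkeeping to align the indices $v$, $w$, and $-w$ and to confirm the monotonicity $\gamma_{-w}\le\gamma_{-v}$. The most delicate point is the $v=1{:}d$ case, where I must check that the growth bounds of Assumption~\ref{assum:grc} together with the Minkowski-content regularity of $\partial\Omega$ genuinely match the hypotheses required by He's theorem when applied to $f=g\cdot\I{\phi\ge 0}$ in its original, unsmoothed form; everything else is a routine linearity argument.
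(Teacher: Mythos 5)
Your proposal is correct and follows essentially the same route as the paper: expand $f_v$ via \eqref{eq:fanova}, apply Theorem~\ref{thm:pvf} to each $P_{-w}f$ (using $j\in 1{:}d\setminus w$ and $\gamma_{-w}\le\gamma_{-v}$), and handle $v=1{:}d$ by writing $f_{1{:}d}=f-\sum_{w\subsetneq 1{:}d}f_w$ together with Corollary 3.5 of \cite{he:2017} applied to the unsmoothed $f$. The one point you flag as delicate is exactly what the paper checks, namely that $g(\Phi^{-1}(\bm u))$ satisfies the boundary growth condition with rates $A_i=B_i+B$ so that He's result applies.
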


\begin{proof}
	For any $w\subseteq v$, by Theorem~\ref{thm:pvf}, we have $P_{-w}f\in\mathcal{C}(\R^{\abs{w}})$ and $$\E{|\hat{I}(P_{-w} f)-{I}(P_{-w} f)|}=O(n^{-1+\gamma_{-w}
		+\epsilon}),$$ since $j\in1{:}d\backslash w$. The first part immediately follows from \eqref{eq:fanova} and $\gamma_{-w}\le \gamma_{-v}$. If Assumptions~\ref{assump1} and \ref{assum:grc} are satisfied with all $j\in 1{:}d$, then the results in the first part hold for any $v\subsetneq 1{:}d$. 	
	We now assume that all $B_i$ and $B$ are arbitrarily small. The first two cases in \eqref{eq:rateanova} are straightforward. For the case $v=1{:}d$, $f_{1{:}d}=f-\sum_{v\subsetneq 1{:}d} f_v$. Note that $f(\Phi^{-1}(\bm u))=g(\Phi^{-1}(\bm u))\I{\bm u\in\Omega}$, where $g(\Phi^{-1}(\bm u))$ viewed as a function over $(0,1)^d$ satisfies the boundary growth condition with rates $A_i=B_i+B$. By Corollary 3.5 of \cite{he:2017}, we obtain that 
	\begin{equation*}
	\E{|\hat{I}(f)-{I}(f)|}=O(n^{-1/2-1/(4d-2)+\epsilon}).
	\end{equation*}
	Using the triangle inequality, we finally have
	\begin{align*}
	\E{|\hat{I}(f_{1{:}d})-{I}(f_{1{:}d})|}&\leq \E{|\hat{I}(f)-{I}(f)|}+ \sum_{\varnothing \neq v\subsetneq 1{:}d}\E{|\hat{I}(f_v)-{I}(f_v)|}\\&=O(n^{-1/2-1/(4d-2)+\epsilon}),
	\end{align*}
	which completes the proof.
\end{proof}

Theorem~\ref{thm:anovarate} suggests that QMC can still be very effective for non-smooth integrands if they  have low effective dimension. For these cases, QMC integration with all the ANOVA terms of the non-smooth integrand, expect the one of highest order, can enjoy the best possible rate $O(n^{-1+\epsilon})$. The highest non-smooth term contributes little to the integration error as its variance is negligible compared to the total variance. This finding can explain the success of  dimension reduction techniques used in improving the accuracy of QMC, such as the linear transform (LT) method proposed by Imai and Tan \cite{imai:tan:2006}, and the gradient principal component analysis (GPCA) method proposed by Xiao and Wang \cite{xiao:wang:2017}. 

\section{Examples from Option Pricing and Greeks Estimation}\label{sec:ex}
\subsection{Model Setting}
Let $S(t)$ denote the underlying price dynamics at time $t$ under the risk-neutral measure. In a simulation framework, it is common that the prices are simulated at discrete times $t_1,\dots,t_d$ satisfying $0=t_0<t_1<\dots<t_d=T$, where $T$ is the maturity of the financial derivative of interest. Without loss of generality,  we assume that the discrete times are evenly spaced, i.e., $t_i=i\Delta t$, where $\Delta t=T/d$. Denote $S_i=S(t_i)$. We assume that under the risk-neutral measure the asset follows the geometric Brownian motion
\begin{equation}
\frac{\mrd S(t)}{S(t)}=\mu\mrd t+\sigma \mrd B(t),\label{GBM}
\end{equation}
where $\mu$ is
the riskfree interest rate, $\sigma$ is the volatility and $B(t)$
is the standard Brownian motion. Under this framework, the solution of \eqref{GBM} is analytically available
\begin{equation}
S(t)=S_0\exp[(\mu-\sigma^2/2)t+\sigma B(t)],\label{Solution}
\end{equation}
where $S_0$ is the initial price of the asset. Let $\bm{B}:=(B(t_1),\dots,B(t_d))^{\top}$. We have $\bm{B}\sim N(\bm{0},\bm{\Sigma})$, where  $\bm{\Sigma}$ is a positive definite matrix with entries $\Sigma_{ij}=\Delta t\min(i,j)$.

Let $\bm{A}$ be a matrix satisfying $\bm{A}\bm{A}^\top=\bm{\Sigma}$. Using the transformation $\bm{B}=\bm{A}\bm x$, where $\bm x\sim N(\bm 0,\bm I_d)$, it follows from
\eqref{Solution} that
\begin{align}\label{eq:si}
S_i=S_i(\bm{x})= S_0\exp\left[(\mu-\sigma^2/2)i\Delta t+\sigma \sum_{j=1}^da_{ij}x_j\right].
\end{align}
The matrix $\bm A$ is  called the \emph{generating matrix} as it determines the way of simulation. Under the risk-neutral measure, the price and the sensitivities of the financial derivative can 
be expressed as an expectation  $I(f)=\E{f(\bm{x})}$ for a real-valued function $f$ over $\R^d$. It is known that the choice of the matrix $\bm A$ may have an impact on the efficiency of QMC (see, e.g., \cite{he:wang:2014,imai:tan:2006}), but it does not affect the variance of plain MC estimate. 

Many functions in the pricing and hedging of financial derivatives are discontinuous or unbounded, which can be expressed in the form \eqref{eq:integrand}. We next consider some representative examples
of this form. Examples~\ref{example1}--\ref{example6} below are the arithmetic Asian option and its Greeks, which were also studied in \cite{he:2017,weng:2017}. Example~\ref{example7} is the binary option, which was considered in \cite{grie:2017}. The Greeks of the binary option can be treated as those of  the arithmetic Asian option in a similar way, so we omit these cases. In this section, $\rho$ and $\Phi$ denote the PDF and the CDF of the standard normal distribution, respectively.

\begin{example}\label{example1}  
	The discounted payoff of an arithmetic average Asian option is
	\begin{equation}\label{eq:asianpayoff}
	f(\bm{x})=e^{-\mu T}\max(S_A-K,0)= e^{-\mu T}\left(S_A-K\right)\I{S_A\ge K},  
	\end{equation}
	where $S_A=(1/d)\sum_{i=1}^{d} S_i(\bm x)$ and $K$ is the strike price.
\end{example}
\begin{example}\label{example2} 	
	The pathwise estimate of the \textit{delta} of the  Asian option   is
	\begin{equation}\label{asian delta}
	f(\bm{x}) =e^{-\mu T} \frac{S_A}{S_0} \I{S_A\ge K}. 
	\end{equation}
	The  \textit{delta} of an  option  is the sensitivity with respect   to the initial price $S_0$ of the underlying asset.
\end{example}
\begin{example}\label{example3} 
	An estimate of the \textit{gamma} of  the  Asian option is
	\begin{equation}\label{eq:asiangamma}
	f(\bm{x}) =e^{-\mu T}  \frac{S_A \left( \log (S(t_1)/S_0)-(\mu+\sigma^2/2 )\Delta t\right) }{S_0^2\sigma^2 \Delta t  }  \I{S_A\ge K},
	\end{equation}
	which results from applying the pathwise method first and then the likelihood ration method (see \cite{glas:2004}). 	
	The \textit{gamma} is the second derivative with respect to the initial price $S_0$ of the underlying asset. 
\end{example}
\begin{example}\label{example4} 
	The pathwise estimate of the \textit{rho} of the  Asian option is
	\begin{equation}\label{eq:asianrho}
	f(\bm{x}) = e^{-\mu T}\left[\frac{\mrd S_A}{\mrd r} -T(S_A-K)\right]
	\I{S_A\ge K},
	\end{equation}
	where
	\begin{equation*}
	\frac{\mrd S_A}{\mrd r} =\frac{T}{d^2} \left( \sum_{j=1}^{d} j S(t_j)\right).
	\end{equation*}
	The  \textit{rho} of an   option  is the sensitivity with respect   to the risk-free interest rate  $r$.
	
\end{example}
\begin{example}\label{example5} 
	The pathwise estimate of the \textit{theta} of the  Asian option   is
	\begin{equation}\label{eq:asiantheta}
	f(\bm{x}) = e^{-\mu T} \left[\frac{d S_A}{d T}-\mu(S_A-K) \right]
	\I{S_A\ge K},
	\end{equation}
	where
	\begin{equation*}
	\frac{\mrd S_A}{\mrd T} = \frac{1}{d} \sum_{i=1}^{d}  S(t_i)\left[  \frac{\omega i }{2d}  + \frac{\log (S(t_i)/S_0)}{2T}  \right].
	\end{equation*}
	The \textit{theta} of an  option  is the sensitivity with respect   to the maturity of the option  $T$.	
\end{example}
\begin{example}\label{example6} 
	The pathwise estimate of the \textit{vega} of the  Asian option is
	\begin{equation}\label{eq:asianvega}
	f(\bm{x}) =e^{-\mu T} \frac{1}{d} \sum_{i=1}^{d} \frac{\mrd S(t_i)}{\mrd\sigma} \I{S_A\ge K}, 
	\end{equation}
	in which
	\begin{equation*}
	\frac{\mrd S(t_i)}{\mrd\sigma}=S(t_i)\frac{1}{\sigma}\left[ \log\left( \frac{S(t_i)}{S_0}\right) -\left( \mu+ \frac{1}{2}\sigma^2 \right) t_i   \right]. 
	\end{equation*}
	The  \textit{vega} of an  option  is the sensitivity with respect   to the volatility $\sigma$.
\end{example}

\begin{example}\label{example7}
	The discounted payoff of a binary Asian option is
	\begin{equation}\label{eq:binarypayoff}
	f(\bm{x})= e^{-\mu T}\I{S_A\ge K}.
	\end{equation}
\end{example}	

\subsection{CQMC Error Rates}
All the examples above fit into the form \eqref{eq:integrand} with $\phi(\bm x)=S_A-K$ and $g(\bm x)$ depending on the examples. It is easy to see that $g,\phi\in \mathcal{C}^{\infty}(\R^d)$. Lemma~\ref{lem:normal} guarantees the validation of Assumption~\ref{assum:tails}. From \eqref{eq:si}, we find that
\begin{equation}\label{eq:dj}
(D_j \phi)(\bm x)=\frac{\sigma}{d}\sum_{i=1}^d(a_{ij}S_i).
\end{equation}
Thanks to all $S_i>0$, Assumption~\ref{assump1} is satisfied if there exists an index $j\in 1{:}d$ such that  
\begin{equation}\label{eq:condj}
a_{ij}\geq 0 \text{ (or } a_{ij}\leq 0) \text{ for } i=1,\dots,d.
\end{equation}
This condition is not void for commonly used constructions of the Brownian motion. For the standard construction and the Brownian bridge construction, all elements $a_{ij}\geq 0$ so that Assumption~\ref{assump1} is satisfied with all $j\in 1{:}d$. For the principal component analysis (PCA) construction, the elements $a_{i1}$ have the same sign so that Assumption~\ref{assump1} is satisfied with $j=1$, but for $j>1$, the elements $a_{ij}$ can have both signs. See \cite{glas:2004,grie:2010} for details on these commonly used constructions.

It remains to verify Assumption~\ref{assum:grc}. For simplicity, we assume that $a_{ij}\geq 0$ for all $i\in 1{:}d$. For any multi-index $\bm \alpha$ (including $\bm \alpha = \bm 0$), it follows from \eqref{eq:si} that
\begin{equation}\label{eq:daphi}
(D^{\bm \alpha}\phi)(\bm{x}) = \frac 1 d\sum_{i=1}^d(D^{\bm \alpha}S_{i})(\bm{x})-K\I{\bm \alpha = \bm 0},
\end{equation}
where
\begin{equation}\label{eq:dsi}
(D^{\bm \alpha}S_{i})(\bm{x}) = \sigma^{\abs{\bm \alpha}}S_i\prod_{j\in v_+}a_{ij}^{\alpha_j},
\end{equation}
and $v_+=\{j\in 1{:}d|\alpha_j>0\}$. For any $a\in\R$ and any $B>0$, it follows from \eqref{eq:norminv} that 
\begin{align*}
\lim_{u\to 0+}\exp[a\Phi^{-1}(u)]u^{B}
&=\lim_{u\to 0+}\exp[-a\sqrt{-2\log(u)}+B\log (u)]\\
&=\lim_{u\to 0+}\exp[-(B/2)(\sqrt{-2\log(u)}+a/B)^2+a^2/(2B)]\\
&=0.
\end{align*}
Similarly,
$$\lim_{u\to 1-}\exp[a\Phi^{-1}(u)](1-u)^{B}=0.$$
As a result, $\exp(a\Phi^{-1}(u))=O(\min(u,1-u)^{-B}).$ This implies that
\begin{equation}\label{eq:axi}
\exp(a x_i) = O(\min(\Phi(x_i),1-\Phi(x_i))^{-B_i})
\end{equation}
for arbitrarily small $B_i>0$. It then follows from \eqref{eq:si} and \eqref{eq:daphi}--\eqref{eq:axi} that the condition \eqref{eq:dphi} holds for arbitrarily small $B_i>0$ and arbitrary multi-index $\bm \alpha$. 
Since $\bm\Sigma$ is nonsingular, there exists an index $i^*\in 1{:}d$ such that $a_{i^*j}>0$.
For any $b>0$, by \eqref{eq:dj} and \eqref{eq:axi}, we find that
\begin{align*}
|(D_j \phi)(\bm x)|^{-b}&=(\sigma/d)^{-b}\left[\sum_{i=1}^d(a_{ij}S_i)\right]^{-b}\\
&\le (\sigma a_{i^*j})^{-b}d^{b}S_{i^*}^{-b}\\
&=(\sigma a_{i^*j}S_0)^{-b}d^{b}\exp\left[-b(\mu-\sigma^2/2)i^*\Delta t-b\sigma \sum_{j=1}^da_{i^*j}x_j\right]\\
&=O\left(\prod_{i=1}^{d}\min(\Phi(x_i),1-\Phi(x_i))^{-B_i}\right).
\end{align*}
So the condition \eqref{eq:dphib} holds for arbitrarily small $B_i$ and all $b> 0$.  He \cite{he:2017} showed that the boundary growth condition for $g(\Phi^{-1}(\bm{u}))$ is satisfied with arbitrarily small $B_i>0$ for Examples~\ref{example1}--\ref{example6}.
By the same way, it is easy to check that the condition \eqref{eq:dgup} for the function $g(\bm x)$ is satisfied with arbitrarily small $B_i>0$ and arbitrary $\bm \alpha$. For Example~\ref{example7}, the condition \eqref{eq:dgup} is straightforward since $g(\bm x)$ is a constant. As a result, Assumption~\ref{assum:grc} is satisfied with arbitrarily small $B_i>0$ and all $r>0$. Based on the analysis above, we conclude the following theorem for our examples as consequences of Theorems~\ref{thm:pvf} and \ref{thm:anovarate}.

\begin{theorem}\label{thm:examples}
	Let $f(\bm{x})=g(\bm x)\I{\phi(\bm x)\ge 0}$ be one of the functions \eqref{eq:asianpayoff}--\eqref{eq:binarypayoff}, and let $j\in 1{:}d$ be a fixed index satisfying the condition  \eqref{eq:condj}. If $v,w$ are  subsets of $1{:}d$ satisfying $j\in v$ and $j\notin w$, then we have
	\begin{itemize}
		\item
		$P_v f \in  \mathcal{C}^{\infty}(\R^{-v})$, $f_w \in  \mathcal{C}^{\infty}(\R^{w})$, and
		\item 	for arbitrarily small $\epsilon>0$,
		\begin{align*}
		\E{|\hat{I}(P_vf)-{I}(f)|}&=O(n^{-1+\epsilon}),\\
		\E{|\hat{I}(f_w)-{I}(f_w)|}&=O(n^{-1+\epsilon}).
		\end{align*}	
	\end{itemize}
	If the condition \eqref{eq:condj} is satisfied with all $j\in 1{:}d$, the results above hold for all $v\neq \varnothing$ and  $w\neq 1{:}d$. 
\end{theorem}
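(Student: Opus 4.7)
The plan is to package the hypothesis verification already carried out in the paragraph preceding the theorem statement and to invoke Theorems~\ref{thm:pvf} and \ref{thm:anovarate}. The theorem is essentially a corollary; no new machinery is required beyond checking Assumptions~\ref{assump1}, \ref{assum:grc}, and \ref{assum:tails} for each listed example with arbitrarily small $B_i>0$ and every $r\ge 1$.

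First, I would confirm Assumption~\ref{assump1} under the condition \eqref{eq:condj}. From the explicit expression \eqref{eq:dj}, namely $(D_j\phi)(\bm x)=(\sigma/d)\sum_{i=1}^d a_{ij}S_i$, together with the strict positivity $S_i>0$, the sum is a nonzero constant-sign combination, and hence $(D_j\phi)(\bm x)\neq 0$ for all $\bm x$, provided at least one $a_{i^*j}$ is strictly nonzero. Non-singularity of $\bm\Sigma=\bm A\bm A^{\top}$ forces each column of $\bm A$ to contain a nonzero entry, so such $i^*$ always exists. The case $a_{ij}\le 0$ is handled by symmetry.

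Next I would verify Assumption~\ref{assum:grc}. The preceding paragraph already establishes the key tail bound \eqref{eq:axi}, namely $\exp(ax_i)=O(\min(\Phi(x_i),1-\Phi(x_i))^{-B_i})$ for arbitrarily small $B_i>0$. Combined with \eqref{eq:si}, \eqref{eq:daphi}, and \eqref{eq:dsi}, this yields \eqref{eq:dgup} and \eqref{eq:dphi} for every multi-index $\bm\alpha$ with arbitrarily small $B_i$, since $\phi$ and each $g$ (and all of their derivatives) are polynomial expressions in $S_1,\dots,S_d$ times factors that are themselves polynomial in the $x_i$. For \eqref{eq:dphib}, I would use the crude lower bound $|(D_j\phi)(\bm x)|\ge (\sigma/d)\,|a_{i^*j}|\,S_{i^*}$ and then apply \eqref{eq:axi} to $S_{i^*}^{-b}=S_0^{-b}\exp(-b(\mu-\sigma^2/2)i^*\Delta t - b\sigma\sum_j a_{i^*j}x_j)$, which absorbs any $b>0$ into an arbitrarily small collection of $B_i$. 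The boundary growth of $g$ in Examples~\ref{example2}--\ref{example6} is already cited from \cite{he:2017}; Example~\ref{example7} is trivial since $g$ is constant, and Example~\ref{example1} is handled directly from \eqref{eq:si} and \eqref{eq:axi}. Assumption~\ref{assum:tails} for the standard normal is provided by Lemma~\ref{lem:normal}.

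With all three assumptions verified with arbitrarily small $B_i$ and any $r\ge 1$, the last bullet of Theorem~\ref{thm:pvf} delivers $P_v f\in\mathcal{C}^{\infty}$ together with $\E{|\hat I(P_v f)-I(f)|}=O(n^{-1+\epsilon})$, and the last bullet of Theorem~\ref{thm:anovarate} delivers the analogous conclusions for $f_w$ when $j\notin w$. The universal-$j$ extension (allowing any $v\neq\varnothing$ and $w\neq 1{:}d$) follows verbatim once \eqref{eq:condj} holds at every $j\in 1{:}d$, which covers the standard and Brownian bridge constructions. I do not foresee a substantive obstacle; the only care needed is bookkeeping to confirm that in each Greek the polynomial-in-$S_i$ structure of $g$ (including the likelihood-ratio factor in \eqref{eq:asiangamma} and the explicit $\log(S(t_i)/S_0)$ weights in \eqref{eq:asiantheta}--\eqref{eq:asianvega}) still fits the envelope \eqref{eq:dgup} under \eqref{eq:axi}.
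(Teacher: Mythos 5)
Your proposal is correct and follows essentially the same route as the paper: the paper's ``proof'' of Theorem~\ref{thm:examples} is precisely the preceding verification that condition~\eqref{eq:condj} yields Assumption~\ref{assump1} via \eqref{eq:dj}, that \eqref{eq:axi} together with \eqref{eq:daphi}--\eqref{eq:dsi} and the lower bound $|(D_j\phi)(\bm x)|\ge(\sigma a_{i^*j}/d)S_{i^*}$ gives Assumption~\ref{assum:grc} with arbitrarily small $B_i$ and all $r$, and that Lemma~\ref{lem:normal} gives Assumption~\ref{assum:tails}, after which Theorems~\ref{thm:pvf} and \ref{thm:anovarate} are invoked exactly as you do. No gap.
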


Note that for the standard construction and the Brownian bridge construction, the condition \eqref{eq:condj} is satisfied with all $j\in 1{:}d$. For these cases, it is not surprising that the ANOVA terms can have unlimited smoothness, except the one of highest order, because Griebel et al. \cite{grie:2013,grie:2016} have shown such a smoothness property for the arithmetic Asian option (Example~\ref{example1}). We extend their results to  discontinuous functions so that the smoothing effect of conditioning can be examined for the Greeks of the arithmetic Asian option (Examples \ref{example2}--\ref{example6}) and the binary option (Example~\ref{example7}). More importantly, we show additionally that QMC can achieve the best possible error rate of $O(n^{-1+\epsilon})$ for these smooth terms with singularities. For the highest order term which is non-smooth, the rate may be just $O(n^{-1/2-1/(4d-2)+\epsilon})$, as claimed in Theorem~\ref{thm:anovarate}.

We now consider the case in which the condition \eqref{eq:condj} (or equivalently Assumption~\ref{assump1}) does not hold. In other words, there exist $i_1,i_2\in 1{:}d$ such that $a_{i_1j}<0<a_{i_2j}$.
For our examples, we find that
\begin{equation*}\label{eq:djj}
(D_jD_j \phi)(\bm x)=\frac{\sigma^2}{d}\sum_{i=1}^d(a_{ij}^2S_i)>0,
\end{equation*}
implying that $\phi(\bm x)$ is strictly convex over $\R^d$. Since $a_{i_1j}<0<a_{i_2j}$, by the implicit function theorem again, there exists a unique function $\Psi\in\mathcal{C}^{\infty}(\R^{d-1})$ such that $(D_j \phi)(\Psi(\bm y),\bm y)=0$ for all $\bm y\in \R^{d-1}$. Therefore, for a given $\bm y$, $\phi(x_j,\bm y)$ is decreasing with respect to $x_j$ for $x_j<\Psi(\bm y)$, while it is increasing for $x_j>\Psi(\bm y)$. This gives
\begin{equation*}
\min_{x_j\in \R}\phi(x_j,\bm y)=\phi(\Psi(\bm y),\bm y).
\end{equation*}
Denote
\begin{align*}
A:&=\{\bm y\in \R^{d-1}|\phi(\Psi(\bm y),\bm y)< 0\},\\
A^c:&=\{\bm y\in \R^{d-1}|\phi(\Psi(\bm y),\bm y)\ge 0\}=\R^{d-1}\backslash A,\text{ and}\\
B:&=\{\bm y\in \R^{d-1}|\phi(\Psi(\bm y),\bm y)= 0\}.
\end{align*}
It is easy to see that $A^c\neq \varnothing$ and $\phi(x_j,\bm y)\ge 0$ for any $\bm y\in A^c$ and any $x_j\in \R$. By the implicit function theorem, there exist two unique functions $\psi_j^L,\psi_j^R\in\mathcal{C}^{\infty}(A)$ such that $\psi_j^L(\bm y)<\Psi(\bm y)<\psi_j^R(\bm y)$ and $\phi(\psi_j^L(\bm y),\bm y)=\phi(\psi_j^R(\bm y),\bm y)=0$ for all $\bm y\in A$. The function $P_j f$ can then be rewritten as
\begin{equation*}
(P_j f)(\bm y)=\displaystyle\begin{cases}
\displaystyle\int_{-\infty}^{\infty}g(x_j,\bm y)\rho(x_j)\mrd x_j,&\ \bm y\in A^c\\
\displaystyle\int_{-\infty}^{\psi_j^L(\bm y)}g(x_j,\bm y)\rho(x_j)\mrd x_j+\int_{\psi_j^R(\bm y)}^{\infty}g(x_j,\bm y)\rho(x_j)\mrd x_j,&\ \bm y\in A.
\end{cases}
\end{equation*}
Note that $\psi_j^L(\bm y)\to \Psi(\bm y)$ and $\psi_j^R(\bm y)\to \Psi(\bm y)$ as $\bm y$ approaches a boundary point of $A$ lying in $A^c$  (i.e., the set $B$). This implies that $P_j f$ is continuous over $\R^{d-1}$. However, for $k\neq j$, $D_kP_j f$ may be no longer continuous over $\R^{d-1}$. Notice that for $\bm y\in \text{interior}(A)$, 
\begin{align*}
(D_kP_j f)(\bm y)&=\int_{-\infty}^{\psi_j^L(\bm y)}(D_k  g)(x_j,\bm y)\rho(x_j)\mrd x_j+\int_{\psi_j^R(\bm y)}^{\infty}(D_k  g)(x_j,\bm y)\rho(x_j)\mrd x_j\notag\\
&-g(\psi_j^L(\bm y),\bm y)\rho(\psi_j^L(\bm y))\frac{(D_k   \phi)(\psi_j^L(\bm y),\bm y)}{(D_j\phi)(\psi_j^L(\bm y),\bm y)}\\&+g(\psi_j^R(\bm y),\bm y)\rho(\psi_j^R(\bm y))\frac{(D_k  \phi)(\psi_j^R(\bm y),\bm y)}{(D_j\phi)(\psi_j^R(\bm y),\bm y)}.
\end{align*}
Let  $\bm y^*$ be a boundary point of of $A$ lying in $A^c$. Then $\bm y^*\in B$. If 
\begin{equation}\label{eq:nonzero}
g(\Psi(\bm y^*),\bm y^*)(D_k \phi)(\Psi(\bm y^*),\bm y^*)\neq 0,
\end{equation}
then
\begin{equation*}\label{eq:dkpjf}
\lim_{\bm y\to \bm y^*}(D_kP_j f)(\bm y)=\infty,
\end{equation*}
because 
\begin{align*}
\lim_{\bm y\to \bm y^*}\frac{1}{(D_j\phi)(\psi_j^L(\bm y),\bm y)}=\lim_{x \to \Psi(\bm y^*)-}\frac{1}{(D_j\phi)(x,\bm y^*)}=-\infty,
\end{align*}
and 
\begin{align*}
\lim_{\bm y\to \bm y^*}\frac{1}{(D_j\phi)(\psi_j^R(\bm y),\bm y)}=\lim_{x\to \Psi(\bm y^*)+}\frac{1}{(D_j\phi)(x,\bm y^*)}=\infty.
\end{align*}
We claim that there exists an index $k\neq j$ such that $(D_k \phi)(\Psi(\bm y^*),\bm y^*)\ne 0$. Otherwise, if $(D_k \phi)(\Psi(\bm y^*),\bm y^*)= 0$ for all $k=1,\dots,d$, we then have all $S_i=0$ by \eqref{eq:dj}. That leads to a contradiction. So the condition \eqref{eq:nonzero} reduces to 
$g(\Psi(\bm y^*),\bm y^*)\neq 0$. For Examples \ref{example2}--\ref{example6} and \ref{example7}, it is easy to see that there exists (at least) a point $\bm y^*\in B$ such that $g(\Psi(\bm y^*),\bm y^*)\neq 0$. This implies that $P_j f\notin  \mathcal{C}^1(\R^{d-1})$ for Examples \ref{example2}--\ref{example6}  and \ref{example7} because the derivative $(D_kP_j f)(\bm y^*)$ does not exist. 

Now let us consider Example~\ref{example1}. For any $\bm y\in A$, since  $$g(\psi_j^L(\bm y),\bm y)=g(\psi_j^R(\bm y),\bm y)=0,$$ $(D_kP_j f)(\bm y)$ reduces to
\begin{equation*}
(D_kP_j f)(\bm y)=\int_{-\infty}^{\psi_j^L(\bm y)}(D_k  g)(x_j,\bm y)\rho(x_j)\mrd x_j+\int_{\psi_j^R(\bm y)}^{\infty}(D_k  g)(x_j,\bm y)\rho(x_j)\mrd x_j,
\end{equation*}
which converges to $\int_{-\infty}^{\infty}(D_k  g)(x_j,\bm y^*)\rho(x_j)\mrd x_j$ as $\bm y\to \bm y^*\in B$. So we have  $P_j f\in  \mathcal{C}^1(\R^{d-1})$. However, the higher order mixed partial derivative $D^{\bm \alpha}P_j f$ with $\abs{\bm \alpha}>1$ does not exist for some point in $B$, because for any $\bm y\in \text{ interior}(A)$ the derivative $D^{\bm \alpha}P_j f$ includes some terms like \eqref{eq:hy} (replacing $\psi$ with  $\psi_j^L$ or $\psi_j^R$), which converges to infinity as $\bm y$ approaches a boundary point of $A$ lying in $A^c$.
This implies that $P_j f\notin  \mathcal{C}^2(\R^{d-1})$ for Example~\ref{example1}. 

We conclude that if the condition \eqref{eq:condj} is violated, $P_j f$ cannot have unlimited smoothness for our examples. It is only guaranteed that $P_j f$ is continuous. The mixed partial derivatives of $(P_j f)\circ \Phi^{-1}$ may have additional
singularities beyond those at the boundary of the unit cube $[0,1]^{d-1}$.
So, in this case, we may not obtain the mean error rate $O(n^{-1+\epsilon})$ for the RQMC estimate $\hat{I}(P_j f)$ as in Theorem~\ref{thm:examples}, except the case of Example~\ref{example1} with $d=2$. This suggests that under the PCA construction, integrating  $x_j$ out for $j>1$ may not gain as much as integrating out $x_1$ in improving the efficiency of QMC. 

Although conditioning can result in smooth integrands, the resulting integrands may have large effective dimension. To circumvent this, a good strategy in practice is to employ some dimension reduction techniques after the conditioning process, such as the LT method proposed by Imai and Tan \cite{imai:tan:2006} and the GPCA method proposed by Xiao and Wang \cite{xiao:wang:2017}. Weng et al. \cite{weng:2017} called this strategy the two-step procedure. In the first step (called the smoothing step), they used the variables push-out smoothing method to remove the discontinuities in the target functions. In the second step (called
the dimension reduction step), they used a so-call CQR method to reduce the effective dimension of the smoothed function. In the the smoothing step, one can use the conditioning technique instead. In principle, the dimension reduction techniques transform  the smooth function $(P_v f)(\bm x_{-v})$
to the form $(P_v f)(\bm U\bm x_{-v})$, where $\bm U$ is a well-chosen orthogonal matrix. The transformation does not affect the unbiasedness of the estimate, since
$$\E{(P_v f)(\bm U\bm x_{-v})}=\E{(P_v f)(\bm x_{-v})}=I(f)$$
holds for arbitrary orthogonal matrix $\bm U$. The next theorem shows that using  dimension reduction techniques after conditioning does not change the smoothness property and the mean error rate.

\begin{theorem}\label{thm:vrt}
	Let $f(\bm{x})=g(\bm x)\I{\phi(\bm x)\ge 0}$ be one of the functions \eqref{eq:asianpayoff}--\eqref{eq:binarypayoff}, and let $j\in 1{:}d$ be a fixed index satisfying the condition  \eqref{eq:condj}. Define $\tilde{f}(\bm x_{-v})=(P_v f)(\bm U \bm x_{-v})$, where $v\subseteq 1{:}d$, $\bm U\in \R^{(d-\abs{v})\times(d-\abs{v})}$ is an arbitrary orthogonal matrix. If $j\in v$, then $\tilde{f}\in \mathcal{C}^{\infty}(\R^{-v})$, and $\mathbb{E}[|\hat{I}(\tilde{f})-{I}(f)|]=O(n^{-1+\epsilon})$	for arbitrarily small $\epsilon>0$. 
\end{theorem}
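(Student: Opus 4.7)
The smoothness assertion is immediate: Theorem~\ref{thm:examples} gives $P_vf\in\mathcal{C}^\infty(\R^{d-\abs{v}})$ (since $j\in v$ and \eqref{eq:condj} holds), and composition with the linear map $\bm y\mapsto\bm U\bm y$ preserves smoothness. For the error rate, set $\tilde q(\bm u_{-v})=\tilde f(\Phi^{-1}(\bm u_{-v}))$. Orthogonality of $\bm U$ combined with $\bm x_{-v}\sim N(\bm 0,\bm I)$ implies $\bm U\bm x_{-v}\sim N(\bm 0,\bm I)$, so $I(\tilde f)=I(P_vf)=I(f)$ and $\hat I(\tilde f)$ is unbiased for $I(f)$. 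By Proposition~\ref{prop:owen}, it then suffices to show that $\tilde q$ satisfies the boundary growth condition \eqref{eq:grow} with rates $A_i$ that can be taken arbitrarily small.

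Fix $w\subseteq 1{:}d\setminus v$ and write $\bm y=\Phi^{-1}(\bm u_{-v})$, $\bm z=\bm U\bm y$. The chain rule gives
\begin{equation*}
D_w\tilde q(\bm u_{-v})=\sum_{\abs{\bm\alpha}=\abs{w}}c_{\bm\alpha}(\bm U)\,(D^{\bm\alpha}P_vf)(\bm z)\prod_{i\in w}\frac{\mrd\Phi^{-1}(u_i)}{\mrd u_i},
\end{equation*}
where the coefficients $c_{\bm\alpha}(\bm U)$ are bounded products of entries of $\bm U$. Because Assumption~\ref{assum:grc} is verified in Section~\ref{sec:ex} for each of the examples with arbitrarily small $B_i$, the bound \eqref{eq:dapvf} in the proof of Theorem~\ref{thm:pvf} applies and gives, for any desired $B_i>0$,
\begin{equation*}
\abs{(D^{\bm\alpha}P_vf)(\bm z)}\le \tilde L\prod_{i\in 1{:}d\setminus v}\min(\Phi(z_i),1-\Phi(z_i))^{-(2\abs{\bm\alpha}+1)B_i}.
\end{equation*}

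The crux is to convert this bound in $\bm z$-coordinates into one in $\bm y$-coordinates. Gaussian tail asymptotics (cf.\ the proof of Lemma~\ref{lem:normal}) yield that for each $B>0$ and each $B'>B$ there is a constant $C_{B,B'}<\infty$ with $\min(\Phi(z),1-\Phi(z))^{-B}\le C_{B,B'}\exp(B'z^2/2)$ for all $z\in\R$. Orthogonality gives $z_i^2=(\sum_k U_{ik}y_k)^2\le\norm{\bm y}^2=\sum_k y_k^2$, so each factor above is dominated by $C\prod_k\exp(B'y_k^2/2)$. The same tail asymptotics applied in reverse give $\exp(B'y_k^2/2)=O(\min(\Phi(y_k),1-\Phi(y_k))^{-\tilde B_k})$ for any $\tilde B_k>B'$, since $(1-\Phi(y))^{-\tilde B_k}$ grows like $y^{\tilde B_k}\exp(\tilde B_k y^2/2)$ as $y\to\infty$ and symmetrically at $-\infty$. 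Multiplying over $i\in 1{:}d\setminus v$, and combining with \eqref{eq:finvbound} from Assumption~\ref{assum:tails} for the $(\Phi^{-1})'$ factors, $\tilde q$ satisfies \eqref{eq:grow} with rates $A_k$ that can all be made arbitrarily small, and Proposition~\ref{prop:owen} delivers $\E{\abs{\hat I(\tilde f)-I(f)}}=O(n^{-1+\epsilon})$. The main obstacle is precisely this orthogonal-mixing estimate: $\bm U$ spreads growth across coordinates, so the coordinate-by-coordinate bound from Theorem~\ref{thm:pvf} does not transfer directly, and the crude estimate $z_i^2\le\norm{\bm y}^2$ only survives because the growth rates are arbitrarily small to begin with, leaving the $d$-fold multiplication harmless.
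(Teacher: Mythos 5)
Your proof is correct, but it takes a genuinely different route from the paper's. The paper's proof is a two-line structural reduction: writing $\bm V$ for the block-diagonal orthogonal matrix that is the identity on the $v$-coordinates and $\bm U$ on the $-v$-coordinates, the function $(P_v f)(\bm U\bm x_{-v})$ is exactly $(P_v f)(\bm x_{-v})$ computed with the new generating matrix $\tilde{\bm A}=\bm A\bm V$; since $\tilde{\bm A}\tilde{\bm A}^\top=\bm\Sigma$ and the $j$th column is unchanged (because $j\in v$), condition \eqref{eq:condj} persists and Theorem~\ref{thm:examples} applies verbatim. You instead verify the boundary growth condition for the rotated function directly, and your key step --- converting the coordinate-wise bound \eqref{eq:dapvf} from $\bm z=\bm U\bm y$ back to $\bm y$ via $\min(\Phi(z),1-\Phi(z))^{-B}\le C\exp(B'z^2/2)$ and $z_i^2\le\norm{\bm y}^2$ --- is sound. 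The trade-off is real: the paper's argument is shorter, exploits the fact that the examples depend on $\bm x$ only through $\bm A\bm x$, and preserves the growth rates exactly, whereas your argument does not use that structure (so it would extend to any smoothed integrand obeying \eqref{eq:dapvf} under a Gaussian $\rho$) but inflates the growth rates by roughly a factor of $d-\abs{v}$, and hence, as you note yourself, only survives because the $B_i$ are arbitrarily small; it would not recover a quantitative rate $\gamma_v<1$ from fixed nonzero $B_i$. One point worth making explicit: your chain rule in rotated coordinates produces repeated partials $D^{\bm\alpha}P_vf$ with some $\alpha_k>1$, which is covered because \eqref{eq:dapvf} is stated for general multi-indices with $\abs{\bm\alpha}\le r$ (and $r$ is unlimited for these examples), not just the mixed first partials appearing in \eqref{eq:grow}.
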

\begin{proof}
	Note that $(P_v f)(\bm x_{-v})$ depends on the generating matrix $\bm A$ satisfying $\bm A\bm A^\top=\bm \Sigma$. For an arbitrary orthogonal matrix $\bm U$, $(P_v f)(\bm U \bm x_{-v})$ can be expressed as $(P_v f)(\bm x_{-v})$ by replacing the matrix $\bm A$ with another generating matrix $\tilde{\bm A}$ satisfying $\tilde{\bm A}\tilde{\bm A}^\top=\bm \Sigma$. Since the $j$th columns of $\bm A$ and $\tilde{\bm A}$ are the same, the condition  \eqref{eq:condj} still holds for $\tilde{\bm A}$. Applying Theorem~\ref{thm:examples}, we obtain the desired results.
\end{proof}

\subsection{Using CQMC in Practice}
A practical aspect of using CQMC is to calculate analytically $P_j f$. Indeed, if we choose the standard construction of the Brownian motion, it is easy to obtain the closed form of  $P_j f$ with $j=1$ for the examples above. Under the standard construction, we have
\begin{equation*}
\bm{A} = \sqrt{\Delta t}\left[
\begin{array}{cccc}
1 & 0 & \cdots & 0\\
1 & 1 &  \cdots & 0\\
\vdots & \vdots &\ddots  & \vdots\\
1 & 1 & \cdots & 1 
\end{array}
\right],
\end{equation*}
which is a lower triangular matrix. This gives
$$S_i = \exp(\sigma \sqrt{\Delta t} x_1)g_i(\bm x_{2{:}d}),$$
where $g_i(\bm x_{2{:}d})=S_0\exp[(\mu-\sigma^2/2)i\Delta t+\sigma \sqrt{\Delta t} \sum_{j=2}^i x_j]$. It is easy to see that for all $\bm x_{2:d}\in \mathbb{R}^{d-1}$, 
$$\psi_1(\bm x_{2{:}d})=\frac{1}{\sigma \sqrt{\Delta t}}\left[\log(dK)-\log\left(\sum_{i=1}^d g_i(\bm x_{2{:}d})\right)\right].$$
This implies that $U_1 =  \mathbb{R}^{d-1}$, $U_1^+=U_1^-=\varnothing$. 
Therefore, 
\begin{align*}
(P_1 f)(\bm x_{2{:}d}) &= \frac{1}{\sqrt{2\pi}}\int g(\bm x)\I{\phi(\bm x)\ge 0}\exp(-x_1^2/2)\mrd x_1\\
&=\frac{1}{\sqrt{2\pi}}\int_{\psi_1(\bm x_{2{:}d})}^{\infty} g(x_1,\bm x_{2{:}d})\exp(-x_1^2/2)\mrd x_1.
\end{align*}
Since $g(\bm x)$ in the examples is a linear combination of components $S_i$ and $S_i\log (S_k)$, $i,k=1,\dots,d$, it reduces to calculate integrals of the form
$$\mu(a,b,c,\ell) = \frac{1}{\sqrt{2\pi}}\int_{a}^\infty (b+cx_1)\exp(-x_1^2/2+\ell x_1)\mrd x_1,$$
where $a,b,c,\ell$ do not depend on $x_1$. By the change of variables, we have
\begin{align*}
\mu(a,b,c,\ell) &= \frac{\exp(\ell^2/2)}{\sqrt{2\pi}}\int_{a-\ell}^\infty (b+\ell+cx_1)\exp(-x_1^2/2)\mrd x_1\\
&=\exp(\ell^2/2)(b+\ell)[1-\Phi(a-\ell)]+\frac{\exp(\ell^2/2)c}{\sqrt{2\pi}}\exp(-(a-\ell)^2/2).
\end{align*}
Therefore, $P_1 f$ is available for the examples considered in this section. 

Similarly, one can also obtain the closed form of $P_j f$ with $j=d$, for which $S_1,\dots,S_{d-1}$ do not depend on $\bm x_{1:{d-1}}$. For this case, 
$$\psi_d(\bm x_{1{:}d-1})=\frac{1}{\sigma \sqrt{\Delta t}}\left[\log\left(dK-\sum_{i=1}^{d-1} S_i\right)-\log (S_0)-\left(\mu-\frac{\sigma^2}{2}\right)T\right]-\sum_{i=1}^{d-1}x_i,$$
where $\bm x_{1{:}d-1}\in U_d=\{\bm x_{1{:}d-1}|\sum_{i=1}^{d-1} S_i<dK\}$. Also, $U_d^+=\{\bm x_{1{:}d-1}|\sum_{i=1}^{d-1} S_i\ge dK\}=\mathbb{R}^d\backslash U_d$ and $U_d^-=\varnothing$. 

However, for $1<j<d$, $\psi_j(\bm x_{-j})$ cannot be computed analytically under the standard construction. This is the case for the Brownian bridge construction (except the case $j=d$) and the PCA construction with $j=1$. One thus may resort to some root-finding algorithms (such as Newtons' method) to solve the equation $\phi(x_j,\bm{x}_{-j})=0$ for given $\bm x_{-j}\in U_j$. 

In our numerical experiments, we examine the mean error rate of  CQMC for Example~\ref{example2} with the standard construction and $j=1$. From the analysis above, $P_1 f$ is available. We also investigate the combination of the CQMC method with the GPCA method proposed by \cite{xiao:wang:2017}. The combined method is called CQMC+GPCA. Theorem~\ref{thm:vrt} shows that both the CQMC method and the CQMC+GPCA method have a mean error of $O(n^{-1+\epsilon})$. The purpose of using GPCA is to reduce the effective dimension of $P_1 f$. We thus expect that the CQMC+GPCA method can further enhance the efficiency of the plain CQMC method. One can, of course, use other dimension reduction methods instead of GPCA. Here we only focus on the GPCA method because Xiao and Wang \cite{xiao:wang:2017} found numerically that the GPCA performs consistently better than some common  dimension reduction methods in the literature, such as the LT method.

We carry out numerical experiments using
MATLAB R2013a on a 2.6 GHz CPU. The RQMC points are the linear scrambled Sobol' points proposed by \cite{mato:1998}.
We set parameters in our experiments to  $S(0)=100$, $K=100$, $ \mu=0.01$, $\sigma= 0.4$, $T=1$, and $d\in \{4,20,50\}$. 
The mean errors are estimated based on $200$ repetitions\footnote{Estimation of the mean errors requires knowing the true value of the quantity  being estimated. Here we use the CQMC+GPCA method with a very large sample size  to obtain an accurate estimate of $\E{f(\bm x)}$ and treat it as the true value.}. Figure~\ref{figure:rates} shows the mean errors of the plain MC, the plain QMC, the CQMC, and the CQMC+GPCA methods for the sample sizes $n=2^8,\dots,2^{18}$. When $d=4$, the two CQMC methods (i.e., CQMC and CQMC+GPCA) improve the mean error rate to close to the best possible rate $n^{-1}$. Their mean error rates deteriorate as the dimension $d$ goes up. This is because the mean error of the CQMC methods depends on the dimension $d$, as discussed in Remark~\ref{rem:rate}. Combining the GPCA method with the CQMC method reduces the error satisfactorily. The error rate of the combined method (CQMC+GPCA) declines moderately as the dimension $d$ increases. This highlights the necessity of reducing the effective dimension in QMC. We also observe a similar pattern (not shown here) for the root mean square errors.

\begin{figure}[t]\label{figure:rates}
	\centering
	\includegraphics[width=0.9\hsize]{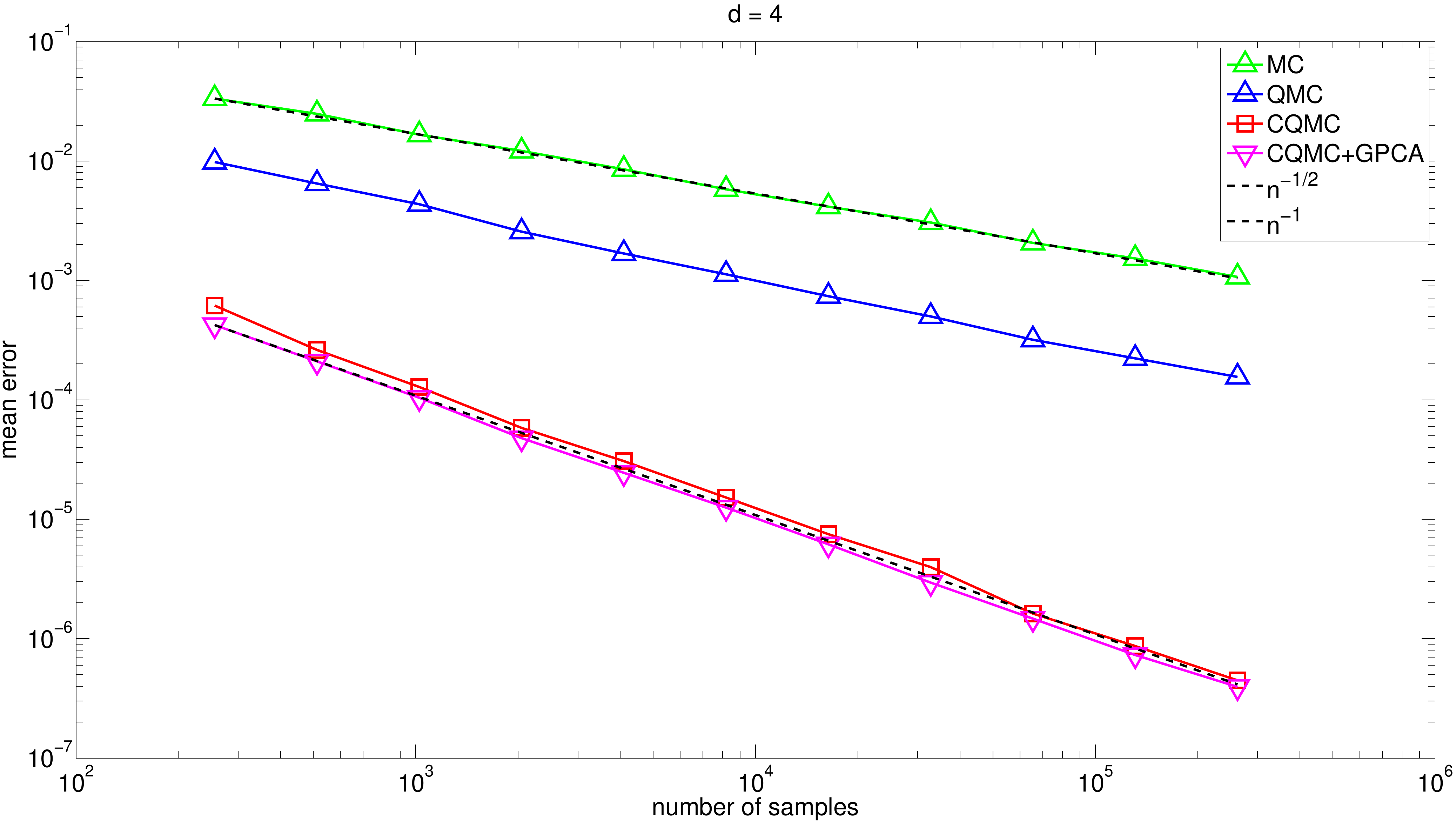}
	\includegraphics[width=0.9\hsize]{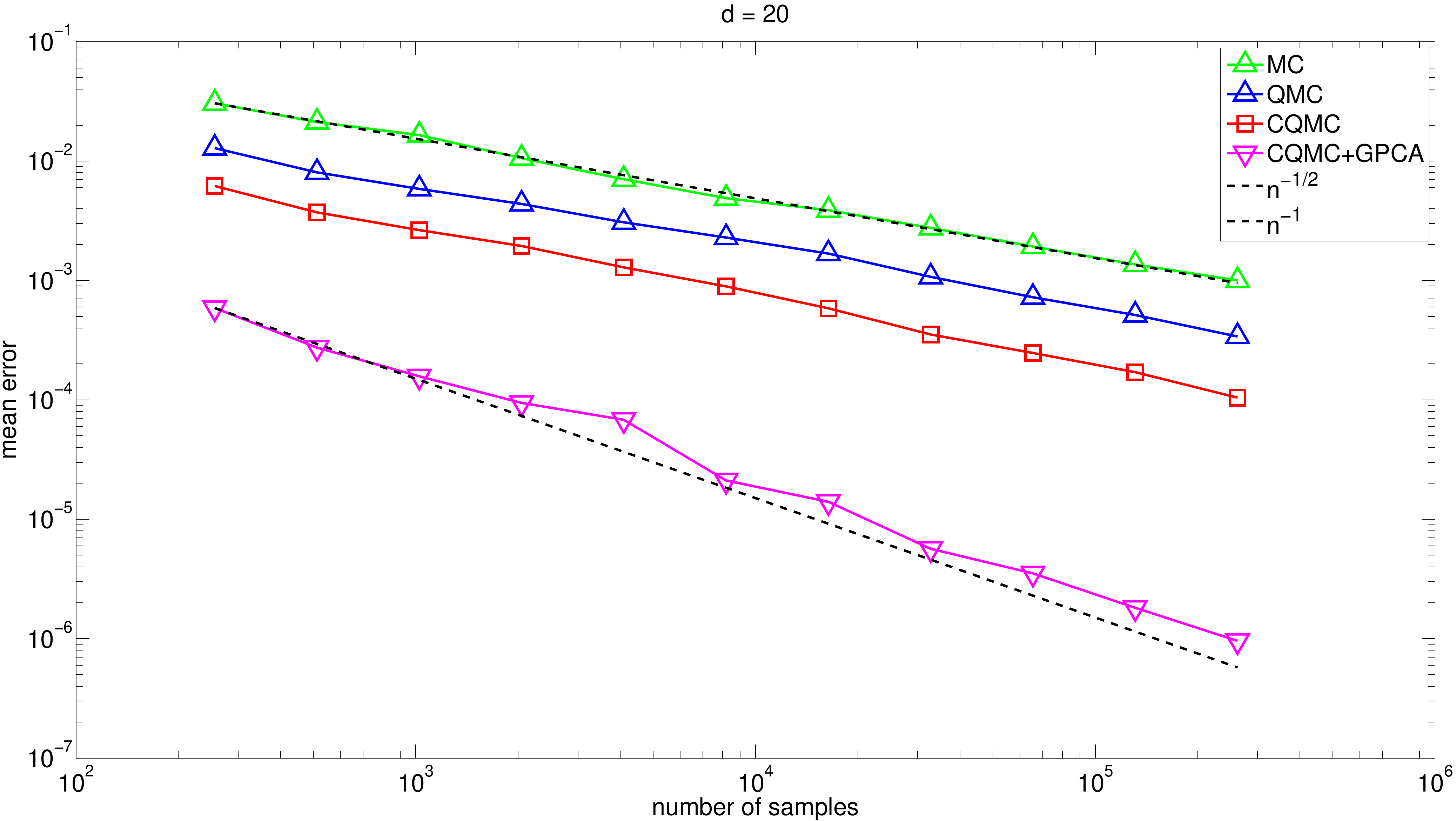}
	\includegraphics[width=0.9\hsize]{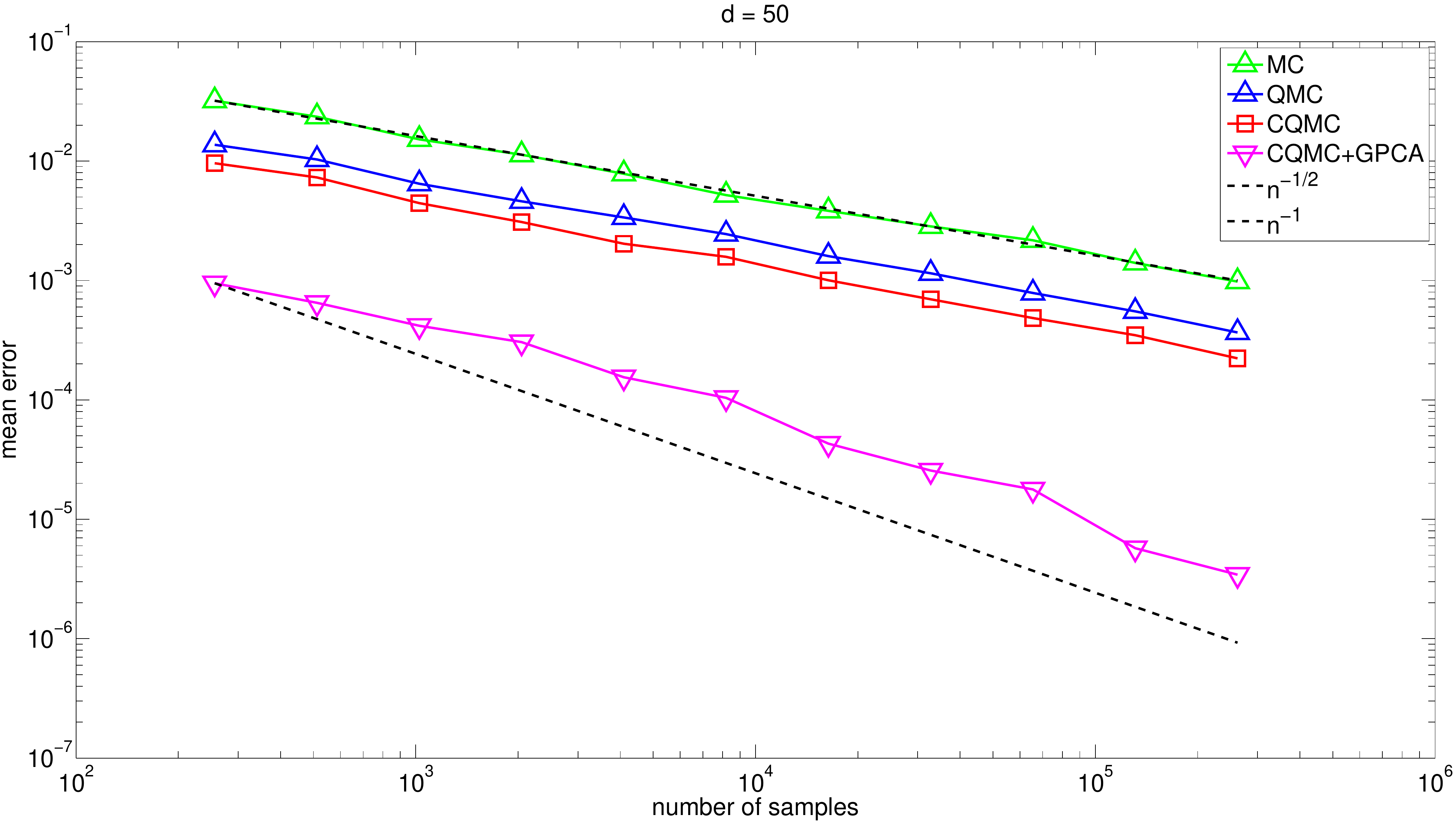}	
	\caption{The mean errors of the plain MC, the plain QMC, the CQMC, and the CQMC+GPCA methods  for Example~\ref{example2} with $d=4,20,50$. The figure has two reference lines proportional to labeled powers
		of $n$. All the mean errors are computed based on 200 runs for $n=2^8,\dots,2^{18}$.}
\end{figure}

\section{Conclusion}\label{sec:concl}
In this paper we found convergence rates of CQMC integration with  discontinuous functions, which typically arise in the pricing and hedging of financial derivatives. The theoretical results show that conditioning not only has the smoothing effect, but also can  bring orders of magnitude reduction in integration error compared to plain QMC.
Under the well-known Black-Scholes framework, we showed that conditioning combined with RQMC yields a mean error of $O(n^{-1+\epsilon})$ for pricing and hedging Asian options. 
This rate also applies to RQMC integration with all the ANOVA terms of discontinuous functions, except the one of the highest order. From this point of view, plain QMC (without conditioning) may be still successful for high-dimensional discontinuous functions with low effective dimension, as observed frequently in option pricing problems (see, e.g., \cite{he:wang:2014,wang:sloan:2011,wang:2013}). 

The rate $O(n^{-1+\epsilon})$ established in this paper also apply to the case of using deterministic Halton sequence as input, thanks to Corollary 5.6 of \cite{owen:2006}. It would be interesting to know how generally this rate holds for other branches of models, beyond the Black-Scholes model.

\section*{Acknowledgments}
	The author thanks Professor Ian Sloan for sharing his work \cite{grie:2017} and his slides presented in MCQMC 2016 conference. The author also thanks Chaojun Zhang for the helpful discussion.

\bibliographystyle{siamplain}
\bibliography{myBiBLibray}

\end{document}